\def\dist{\operatorname{dist}}
\def\N{\mathbb N}
\def\M{\mathcal M}
\def\eps{\varepsilon}
\def\span{\operatorname{span}}
\def\hd{\hat{\operatorname{d}}}
\def\wk{\operatorname{wk}}
\def\wck{\operatorname{wck}}
\def\ca{\operatorname{ca}}
\def\uc{\operatorname{uc}}
\def\fc{\operatorname{fix}_{c_0}}
\def\bc{\operatorname{bc}}
\def\cc{\operatorname{cc}}
\def\wcc{\operatorname{wcc}}
\def\Rcc{\operatorname{Rcc}}
\def\q#1{(#1$_q$)}
\def\qo#1{(#1$_q$)$_\omega$}
\def\qos#1{(#1$_q$)$_\omega^*$}
\def\qO#1{(#1$_q^\omega$)}
\def\qoO#1{(#1$_q^\omega$)$_\omega$}
\def\qosO#1{(#1$_q^\omega$)$_\omega^*$}
\def\clust{\operatorname{clust}}
\def\trn#1{{\left\vert\kern-0.3ex\left\vert\kern-0.3ex\left\vert\hskip 0,1 mm #1\hskip 0,1 mm\right\vert\kern-0.3ex\right\vert\kern-0.3ex\right\vert}}
\newtheorem{theorem}{Theorem}[section]
\newtheorem{proposition}[theorem]{Proposition}
\newtheorem{corollary}[theorem]{Corollary}
\newtheorem{question}[theorem]{Question}
\theoremstyle{definition}
\newtheorem*{remark}{Remark}
\newtheorem*{definition}{Definition}
\begin{document}

\title{Quantification of Pe\l czy\' nski's property (V)}
\author{Hana Kruli\v sov\'a}

\address{Department of Mathematical Analysis \\
Faculty of Mathematics and Physics\\ Charles University in Prague\\
Sokolovsk\'{a} 83, 186 \ 75\\Praha 8, Czech Republic}

\email{krulisova@karlin.mff.cuni.cz}

\subjclass[2010]{46B04,46E15,47B10}
\keywords{Pe\l czy\' nski's property (V); uconditionally converging operators}

\thanks{The research was supported by the Grant No. 142213/B-MAT/MFF of the Grant Agency of the Charles University in Prague
and by the Research grant GA\v{C}R P201/12/0290.}

\begin{abstract}
A Banach space $X$ has Pe\l czy\' nski's property (V) if for every Banach space $Y$ every unconditionally converging operator $T\colon X\to Y$ is weakly compact. In 1962, Aleksander Pe\l czy\' nski showed that $C(K)$ spaces for a compact Hausdorff space $K$ enjoy the property (V), and some generalizations of this theorem have been proved since then. We introduce several possibilities of quantifying the property (V). We prove some characterizations of the introduced quantitative versions of this property, which allow us to prove a quantitative version of Pelczynski's result about $C(K)$ spaces and generalize it. Finally, we study the relationship of several properties of operators including weak compactness and unconditional convergence, and using the results obtained we establish a relation between quantitative versions of the property (V) and quantitative versions of other well known properties of Banach spaces.
\end{abstract}

\maketitle

%%%%%%%%%%%%%%%%%%%%%%%%%%%%%%%%%%%%%%%%%%%%%%%%%%%%%%%%%%%%%%%%%%%%%%%%%%%%%%%%
%%%%%%%%%%%%%%%%%%%%%%%%%%%%%%%%%%%%%%%%%%%%%%%%%%%%%%%%%%%%%%%%%%%%%%%%%%%%%%%%
\section{Introduction}
%%%%%%%%%%%%%%%%%%%%%%%%%%%%%%%%%%%%%%%%%%%%%%%%%%%%%%%%%%%%%%%%%%%%%%%%%%%%%%%%
%%%%%%%%%%%%%%%%%%%%%%%%%%%%%%%%%%%%%%%%%%%%%%%%%%%%%%%%%%%%%%%%%%%%%%%%%%%%%%%%
A Banach space $X$ is said to have \emph{Pe\l czy\' nski's property (V)} if for every Banach space $Y$ every unconditionally converging operator $T\colon X\to Y$ is weakly compact. Recall that a linear operator $T\colon X\to Y$ is \emph{weakly compact} if the image under $T$ of the unit ball of $X$ is a relatively weakly compact set in $Y$. We say that a bounded linear operator $T\colon X\to Y$ is \emph{unconditionally converging} if $\sum_{n} Tx_n$ is an unconditionally convergent series in $Y$ whenever $\sum_{n} x_n$ is a weakly unconditionally Cauchy series in $X$.

Spaces known to enjoy the property (V) are for example $C(K)$ for a compact Hausdorff space $K$; this result from 1962 is due to A. Pe\l czy\' nski \cite{pel}. Several generalizations of Pe\l czy\' nski's theorem have been proved since then. W. B. Johnson and M. Zippin have shown that all real $L^1$~preduals have the property (V) (see \cite{jz}). H. Pfitzner has proved that all $C^*$-algebras enjoy it as well (see \cite{pfitzner}).

The aim of this paper is to explore some possibilities of quantifying Pe\l czy\' nski's property (V). 
Our inspiration comes from plenty of recently published quantitative results. Let us mention for example quantitative versions of Krein's theorem \cite{fhmz, granero, ghm, cmr}, the Eberlein-\v Smulyan and the Gantmacher theorem \cite{ac}, James' compactness theorem \cite{cks, ghp}, weak sequential continuity and the Schur property \cite{kps, ks-schur}, the Dunford-Pettis \cite{kks} and the reciprocal Dunford-Pettis property \cite{qrdpp}, the Grothendieck property \cite{qgroth}, and the Banach-Saks property \cite{qbs}.

The main idea of quantifying an existing qualitative result is simple -- to replace an implication by an inequality. In case of the property (V) we will attempt to replace the implication
\begin{equation}
\label{impl}
T \text{ is unconditionally converging } \Rightarrow\ T \text{ is weakly compact}
\end{equation}
by an inequality
$$\begin{aligned} &\text{measure of weak non-compactness of } T \\
&\hskip 3 cm \leq C\cdot\text{measure of } T \text{ not being unconditionally converging},
\end{aligned}$$
where $C$ is some positive constant depending only on $X$.
These two measures should be positive numbers for each operator $T$ and should equal zero if and only if $T$ is weakly compact or unconditionally converging, respectively. This inequality then trivially includes the original implication, but it says even more.

In Section 2 we explain how to define the above mentioned measures and we introduce a quantitative version of the property (V). Section 3 is devoted to characterizations of a quantitative version of the property (V). Using these characterizations, in Section 4 we prove quantitative versions of the above-mentioned theorem of Pe\l czy\' nski and that of Johnson and Zippin. Section 5 describes a relationship of various properties of operators including weak compactness and unconditional convergence. These relationships are quantified, which enables us to establish a relation between a quantitative version of the property (V) and quantitative versions of some other well known properties of Banach spaces.

Throughout the paper, all Banach spaces can be considered either real or complex (most of the results are valid in both cases), unless stated otherwise. By an operator we always mean a bounded linear operator. If $X$ is a Banach space, we denote by $B_X$ its closed unit ball $\{x\in X:\,\|x\|\leq 1\}$ and by $U_X$ its open unit ball $\{x\in X:\,\|x\|<1\}$.
Every Banach space $X$ is considered canonically embedded into its bidual $X^{**}$.

%%%%%%%%%%%%%%%%%%%%%%%%%%%%%%%%%%%%%%%%%%%%%%%%%%%%%%%%%%%%%%%%%%%%%%%%%%%%%%%%
%%%%%%%%%%%%%%%%%%%%%%%%%%%%%%%%%%%%%%%%%%%%%%%%%%%%%%%%%%%%%%%%%%%%%%%%%%%%%%%%
\section{Quantification of Pe\l czy\' nski's property (V)}
%%%%%%%%%%%%%%%%%%%%%%%%%%%%%%%%%%%%%%%%%%%%%%%%%%%%%%%%%%%%%%%%%%%%%%%%%%%%%%%%
%%%%%%%%%%%%%%%%%%%%%%%%%%%%%%%%%%%%%%%%%%%%%%%%%%%%%%%%%%%%%%%%%%%%%%%%%%%%%%%%
In this section we remind the definition of the property (V). Then we define a few related quantities, which allow us to quantify the property (V). We first focus on a quantity which measures how far is an operator from being unconditionally converging. Then we remind some well known measures of weak non-compactness of sets and operators. Eventually, we introduce a quantitative version of the property (V).

%%%%%%%%%%%%%%%%%%%%%%%%%%%%%%%%%%%%%%%%%%%%%%%%%%%%%%%%%%%%%%%%%%%%%%%%%%%%%%%%
\subsection{Unconditionally converging operators and related quantities}
%%%%%%%%%%%%%%%%%%%%%%%%%%%%%%%%%%%%%%%%%%%%%%%%%%%%%%%%%%%%%%%%%%%%%%%%%%%%%%%%

\begin{definition}
A series $\sum_{n=1}^\infty x_n$ in a Banach space $X$ is
\begin{itemize}
\item \emph{unconditionally convergent}\/ if the series $\sum_{n=1}^\infty {t_n}x_n$ converges whenever $(t_n)$ is a bounded sequence of scalars,
\item \emph{weakly unconditionally Cauchy} (\emph{wuC} for short) if for all $x^*\in X^*$ the series $\sum_{n=1}^\infty |x^*(x_n)|$ converges.
\end{itemize}
\end{definition}

\begin{definition}
Let $X$, $Y$ be Banach spaces. An operator $T\colon X\to Y$ is \emph{unconditionally converging (uc)}\/ if for every weakly unconditionally Cauchy series $\sum_{n=1}^\infty x_n$ in $X$ the series $\sum_{n=1}^\infty Tx_n$ is unconditionally convergent.
\end{definition}

It is easy to see that an operator $T$ is unconditionally converging if and only if for every weakly unconditionally Cauchy series $\sum x_n$ in $X$ the series $\sum Tx_n$ is convergent. Indeed, the ``only if implication'' is trivial since every unconditionally convergent series is convergent. Suppose that $T$ sends wuC series to convergent series. If $\sum x_n$ is a wuC series in $X$ and $(t_n)$ is a bounded sequence of scalars, then $\sum t_n x_n$ is also wuC and hence $\sum t_n x_n$ converges. Therefore $T$ is uc.

Let $(x_n)$ be a bounded sequence in a Banach space $X$. Set
$$\ca\big((x_n)\big)=\inf_{n\in\N} \sup \{\|x_k-x_l\|:\ k,l\in\N,\ k,l\geq n\}.$$
This quantity is a measure of non-cauchyness of the sequence $(x_n)$. More precisely, $\ca\big((x_n)\big)$ is a positive number for every bounded sequence $(x_n)$ and it is equal to zero if and only if $(x_n)$ is Cauchy. Since we deal with Banach spaces only, the quantity $\ca$ measures non-convergence of sequences.

We are now prepared to define a quantity which measures how far is an operator $T$ from being unconditionally converging. Let $T\colon X \to Y$ be an operator between Banach spaces $X$ and $Y$. We set
$$\uc(T) = \sup\left\{\ca\left(\bigg(\sum_{i=1}^n T x_i\bigg)_n\right):\ (x_n)\subset X,\ \sup_{x^*\in B_{X^*}}\sum_{n=1}^\infty |x^*(x_n)|\leq 1\right\}.$$
Clearly, $\uc(T)=0$ provided $T$ is unconditionally converging. On the other hand, if $\sum x_n$ is a wuC series in $X$, then the sets
$$M_k=\bigg\{x^*\in X^*:\, \sum_{n=1}^\infty|x^*(x_n)| \leq k\bigg\},~k\in\N,$$
are closed, and $\bigcup_{k=1}^\infty M_k=X^*$. If we use Baire's theorem, it is not difficult to find a constant $C>0$ such that $\sum_{n=1}^\infty|x^*(x_n)| \leq C$ for all $x^*\in B_{X^*}$. From this we see that $\uc(T)=0$ if and only if $T$ is unconditionally converging.

%%%%%%%%%%%%%%%%%%%%%%%%%%%%%%%%%%%%%%%%%%%%%%%%%%%%%%%%%%%%%%%%%%%%%%%%%%%%%%%%
\subsection{Measuring non-compactness and weak non-compactness of sets and operators}
%%%%%%%%%%%%%%%%%%%%%%%%%%%%%%%%%%%%%%%%%%%%%%%%%%%%%%%%%%%%%%%%%%%%%%%%%%%%%%%%

We will use the following notation. For $A$, $B$ subsets of a Banach space $X$ we set
$$
\begin{aligned}
&\dist(a,B)=\inf\{\|a-b\|:\ a\in A,\ b\in B\}, \\
&\hd(A,B) = \sup\{\dist(a,B):\ a\in A\}.
\end{aligned}
$$
The former is the ordinary distance between the sets $A$ and $B$, the latter is the non-symetrized Hausdorff distance from $A$ to $B$.

Let $A$ be a bounded subset of a Banach space $X$. The Hausdorff measure of non-compactness of the set $A$ is defined by
$$
\begin{aligned}
\chi(A) &= \inf\{\hd(A,K): \emptyset\not=K\subset X \text{ is compact}\} \\
&= \inf\{\hd(A,F): \emptyset\not=F\subset X \text{ is finite}\}.
\end{aligned}
$$
It is easy to see that $\chi(A)=0$ if and only if the set $A$ is relatively compact.

There are many ways of measuring weak non-compactness.
The de Blasi measure of weak non-compactness of the set $A$, which is an analogue of the Hausdorff measure of non-compactness, is defined by
$$\omega(A)=\inf\{\hd(A,K): \emptyset\not=K\subset X \text{ is weakly compact}\}.$$
Clearly, $\omega(A)=0$ for any relatively weakly compact set $A$. De Blasi has proved (see \cite{deblasi}) that $\omega(A)=0$ if and only if $A$ is relatively weakly compact. For every bounded subset $A$ of a Banach space $X$ the inequality
\begin{equation}
\omega(A) \leq \chi(A)
\end{equation}
trivially holds.

Other most commonly used quantities measuring weak non-compactness are
$$
\begin{aligned}
\wk_X(A) &= \hd\big(\overline{A}^{w^*},X\big), \\
\wck_X(A) &= \sup\{\dist(\clust_{w^*}(x_n),X):\ (x_n) \text{ is a sequence in } A\}, \\
\gamma(A) &= \sup\{\|\lim_n\lim_m x^*_m(x_n) - \lim_m\lim_n x^*_m(x_n) \|:\ (x_n) \text{ is a sequence in } A,\\
&\hskip 35 mm (x^*_m) \text{ is a sequence in } B_{X^*}¨, \text{ and the limits exist}\}.
\end{aligned}
$$

Here $\overline{A}^{w^*}$ stands for the weak$^*$ closure of the set $A$ in the bidual space $X^{**}$ and $\clust_{w^*}(x_n)$ is the set of all weak$^*$ cluster points of the sequence $(x_n)$ in $X^{**}$. The quantity $\wck_X$ is related to the Eberlein-\v Smulyan theorem and the quantity $\gamma$ to the Grothendieck double limit criterion for weak compactness.

The above defined quantities are studied for example in \cite{ac} and the following relationships between them are proved there \cite[Theorem 2.3]{ac}. For every bounded subset $A$ of a Banach space $X$
\begin{equation}
\label{equiv}
\wck_X(A) \leq \wk_X(A) \leq \gamma(A) \leq 2\wck_X(A),
\end{equation}
\begin{equation}
\label{non-equiv}
\wk_X(A)\leq \omega(A).
\end{equation}
Moreover, all these quantities are measures of weak non-compactness in the sense that they are equal to zero if and only if the set $A$ is relatively weakly compact.
The estimates (\ref{equiv}) say that the measures $\wk_X$, $\wck_X$, and $\gamma$ are equivalent. The quantity $\omega$ is, however, not equivalent to the other three (see \cite[Corollary 3.4]{ac}), i.e. a Banach space $X$ exists such that there is no constant $C$ satisfying for every bounded $A\subset X$ the inequality $\omega(A)\leq C\wk_X(A)$.

An operator $T\colon X\to Y$ between Banach space $X$ and $Y$ is \emph{weakly compact} if the image $T(B_X)$ of the unit ball of $X$ under $T$ is relatively weakly compact. A natural way to measure how far is an operator $T\colon X\to Y$ from being weakly compact is to measure weak non-compactness of $T(B_X)$. We do it using the above defined measures of weak non-compactness of sets. Let us denote $\omega(T(B_X))$ simply by $\omega(T)$. Analogously $\gamma(T)$, $\wk_Y(T)$, and $\wck_Y(T)$ stand for $\gamma(T(B_X))$, $\wk_Y(T(B_X))$, and $\wck_Y(T(B_X))$, respectively.

The Gantmacher theorem states that an operator $T\colon X\to Y$ between Banach spaces $X$ and $Y$ is weakly compact if and only if the dual operator $T^*\colon Y^*\to X^*$ is weakly compact. This theorem has a quantitative version \cite[Theorem 3.1]{ac}. It says that for any operator $T$
\begin{equation}
\label{q-gantmacher}
\gamma(T)\leq\gamma(T^*)\leq2\gamma(T).
\end{equation}
The analogous result with the quantity $\omega$ in place of $\gamma$ does not hold (see \cite[Theorem 4]{at}). %Hnedy krokodyl rules.

%%%%%%%%%%%%%%%%%%%%%%%%%%%%%%%%%%%%%%%%%%%%%%%%%%%%%%%%%%%%%%%%%%%%%%%%%%%%%%%%
\subsection{Quantitative version of Pe\l czy\' nski's property (V)}
%%%%%%%%%%%%%%%%%%%%%%%%%%%%%%%%%%%%%%%%%%%%%%%%%%%%%%%%%%%%%%%%%%%%%%%%%%%%%%%%

\begin{definition}
Let $X$ be a Banach space. We say that $X$ has \emph{Pe\l czy\' nski's property (V)}\/ if for every Banach space $Y$ every unconditionally converging operator $T\colon X\to Y$ is weakly compact.
\end{definition}

The property (V) can be now quantified as follows.
\begin{definition}
We say that a Banach space $X$ has a quantitative version of Pe\l czy\' nski's property (V) -- let us denote it by \q{V} -- if there exists a~constant $C>0$ such that for every Banach space $Y$ and every operator $T\colon X\to Y$
\begin{equation}
\label{quantpel} \gamma(T)\leq C\cdot \uc(T).
\end{equation}
\end{definition}
If $X$ has a quantitative version of Pe\l czy\' nski's property \q{V}, then it also enjoy the original qualitative property (V). Indeed, for any uc operator $T$ we have $\uc(T)=0$, hence $\gamma(T)=0$ which means that $T$ is weakly compact.

One may ask what would happen if we use a different measure of weak non-compactness in (\ref{quantpel}). By replacing $\gamma$ with $\wk_X$ or $\wck_X$ we achieve nothing new since these quantities are equivalent. However, if we use $\omega$ instead of $\gamma$, we obtain a~stronger assertion. Proposition \ref{C(Omega)}\,(ii) shows that this quantification is really different.
\begin{definition}
We say that a Banach space $X$ has the property \qo{V} if there exists a~constant $C>0$ such that for every Banach space $Y$ and every operator $T\colon X\to Y$
$$\omega(T)\leq C\cdot \uc(T).$$
\end{definition}

There are other possibilities of quantifying the property (V). As we will see later, it sometimes seems to be more natural to quantify the inequality
$$T \text{ is uc } \Rightarrow\ T^* \text{ is weakly compact}$$
which is equivalent to (\ref{impl}) by Gantmacher's theorem.
\begin{definition}
We say that a Banach space $X$ has the property \qos{V} if there exists a~constant $C>0$ such that for every Banach space $Y$ and every operator $T\colon X\to Y$
$$\omega(T^*)\leq C\cdot \uc(T).$$
\end{definition}
Here we have no choice concerning the measure of weak non-compactness. If we used $\gamma(T^*)$ in place of $\omega(T^*)$, it would only yield a reformulation of the property \q{V} by the quantitative Gantmacher theorem (\ref{q-gantmacher}).

%%%%%%%%%%%%%%%%%%%%%%%%%%%%%%%%%%%%%%%%%%%%%%%%%%%%%%%%%%%%%%%%%%%%%%%%%%%%%%%%
%%%%%%%%%%%%%%%%%%%%%%%%%%%%%%%%%%%%%%%%%%%%%%%%%%%%%%%%%%%%%%%%%%%%%%%%%%%%%%%%
\section{Characterizations of a quantitative Pe\l czy\' nski's property (V)}
%%%%%%%%%%%%%%%%%%%%%%%%%%%%%%%%%%%%%%%%%%%%%%%%%%%%%%%%%%%%%%%%%%%%%%%%%%%%%%%%
%%%%%%%%%%%%%%%%%%%%%%%%%%%%%%%%%%%%%%%%%%%%%%%%%%%%%%%%%%%%%%%%%%%%%%%%%%%%%%%%
\label{charact}

Pe\l czy\' nski's property (V) has multiple different characterizations. It turns out that some of these characterizations can also be quantified. We will show that their quantitative versions are equivalent to a quantitative version of Pe\l czy\' nski's property (V).

%%%%%%%%%%%%%%%%%%%%%%%%%%%%%%%%%%%%%%%%%%%%%%%%%%%%%%%%%%%%%%%%%%%%%%%%%%%%%%%%
\subsection{Characterization through subsets of the dual space}
%%%%%%%%%%%%%%%%%%%%%%%%%%%%%%%%%%%%%%%%%%%%%%%%%%%%%%%%%%%%%%%%%%%%%%%%%%%%%%%%

\begin{proposition}
\label{pel-prop}
Let $X$ be a Banach space. The following assertions are equivalent.
\begin{enumerate}
\item $X$ has Pe\l czy\' nski's property (V).
\item Every $K\subset X^*$ which satisfies the condition $(*)$ below is weakly compact.
$$(*) \qquad \lim_{n\to\infty} \sup_{x^*\in K} |x^*(x_n)|=0 \text{ for every wuC series } \sum_{n=1}^\infty x_n \text{ in } X.$$
\end{enumerate}
\end{proposition}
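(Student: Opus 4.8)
The plan is to establish both implications in the equivalence, and the central conceptual tool is the correspondence between operators $T\colon X\to Y$ and certain subsets of $X^*$, namely images of dual balls. Given an operator $T$, the set $T^*(B_{Y^*})\subset X^*$ captures the action of $T$ on functionals, and $T$ is weakly compact precisely when this set is relatively weakly compact (by Gantmacher's theorem). Conversely, given any bounded $K\subset X^*$, one can realize it as $T^*(B_{Y^*})$ for a suitable operator $T$ by taking $Y=C(\overline{K}^{w^*})$ or, more economically, by embedding $X$ into $\ell_\infty(K)$ via the evaluation map $x\mapsto (x^*(x))_{x^*\in K}$. This dictionary will let me translate the operator-theoretic statement (1) into the set-theoretic statement (2) and back.

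First I would prove $(2)\Rightarrow(1)$. Let $T\colon X\to Y$ be unconditionally converging; I want to show $T$ is weakly compact, which by Gantmacher is equivalent to $T^*$ being weakly compact, i.e.\ to $K:=T^*(B_{Y^*})$ being relatively weakly compact. By hypothesis (2) it suffices to verify that $K$ satisfies condition $(*)$. So let $\sum_n x_n$ be a wuC series in $X$; I must show $\sup_{x^*\in K}|x^*(x_n)|\to 0$. Unwinding, $\sup_{x^*\in K}|x^*(x_n)| = \sup_{y^*\in B_{Y^*}} |y^*(Tx_n)| = \|Tx_n\|$. Since $T$ is uc and $\sum_n x_n$ is wuC, the series $\sum_n Tx_n$ converges, which forces its terms $Tx_n$ to tend to $0$ in norm; hence $\|Tx_n\|\to 0$, giving condition $(*)$. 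Thus $K$ is relatively weakly compact and $T$ is weakly compact.

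Next I would prove $(1)\Rightarrow(2)$. Let $K\subset X^*$ satisfy $(*)$; note $K$ is automatically bounded by the Baire-category argument already recorded in the excerpt (applied to the wuC series, or more simply since $(*)$ with bounded wuC-perturbations forces boundedness). I realize $K$ as the image of a dual ball: set $Y=\ell_\infty(K)$ and define $T\colon X\to Y$ by $(Tx)(x^*)=x^*(x)$ for $x^*\in K$; this is a bounded operator, and the evaluation functionals show $K\subset T^*(B_{Y^*})$ up to weak$^*$-closure. The goal is to check that $T$ is unconditionally converging, so that property (V) yields weak compactness of $T$, hence of $T^*(B_{Y^*})$, hence relative weak compactness of $K$. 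To see $T$ is uc, take a wuC series $\sum_n x_n$ in $X$; then $\|Tx_n\|_{\ell_\infty(K)}=\sup_{x^*\in K}|x^*(x_n)|\to 0$ by $(*)$. I would then argue that this, combined with the wuC structure, makes $\sum_n Tx_n$ converge (indeed unconditionally), using the standard fact that an operator is uc iff it sends wuC series to norm-null sequences of terms, or equivalently iff it does not fix a copy of $c_0$; verifying this cleanly is the technical crux.

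The main obstacle I anticipate is in the direction $(1)\Rightarrow(2)$: showing that condition $(*)$ genuinely forces $T$ to be unconditionally converging, rather than merely sending wuC series to sequences with null terms. The equivalence between "$\|Tx_n\|\to 0$ for all wuC $\sum x_n$" and "$T$ is uc" is the content of a classical theorem (operators are uc iff they are not $c_0$-fixing), and I would need either to invoke that characterization or to reprove the relevant implication, taking care that the supremum over $K$ in $(*)$ interacts correctly with the embedding into $\ell_\infty(K)$. A secondary subtlety is ensuring the weak$^*$-closure of $T^*(B_{Y^*})$ does not enlarge $K$ in a way that breaks relative weak compactness; here I would use that relative weak compactness of a set in a space is equivalent to that of its weak$^*$-closed convex hull intersected appropriately, so passing between $K$ and $T^*(B_{Y^*})$ preserves the conclusion.
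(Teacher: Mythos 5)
Your argument is correct and follows essentially the same route as the paper's (quantitative) proof: the correspondence $K \leftrightarrow T^*(B_{Y^*})$ via the evaluation operator into $\ell^\infty(K)$, the identity $\sup_{x^*\in K}|x^*(x_n)|=\|Tx_n\|$, and Gantmacher's theorem. The one step you defer --- that $\|Tx_n\|\to 0$ for every wuC series forces $T$ to be unconditionally converging --- is handled in the paper by the elementary blocking trick (if $\sum_n Tx_n$ is not Cauchy, the blocks $\widetilde{x}_n=\sum_{i=k_n}^{l_n}x_i$ form a wuC series with $\|T\widetilde{x}_n\|$ bounded below), so no appeal to the $c_0$-fixing characterization is actually needed.
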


This proposition, proven by Pe\l czy\' nski \cite[Proposition 1]{pel}, has its quantitative analogue. We have already explained in the previous section how to reformulate the former assertion quantitatively. We now define a quantity which is essential for quantifying the latter one, and then we prove that also quantitative versions of the assertions (1) and (2) are equivalent.

Let $X$ be a Banach space and $K$ be a bounded subset of $X^*$. We set
$$\eta(K)=\sup\Big\{\limsup_n \sup_{x^*\in K} |x^*(x_n)|:\,(x_n)\subset X,\,\sup_{x^*\in B_{X^*}}\sum_{n=1}^\infty |x^*(x_n)|\leq 1\Big\}.$$
This quantity measures to what extent $K$ fails to satisfy the condition $(*)$ from the Proposition \ref{pel-prop}\,(2). Obviously, $\eta(K)$ is positive for every bounded $K\subset X^*$ and equals zero if and only if $K$ satisfies the condition $(*)$.

%%%%%%%%%%%%%%%%%%%%%%%%%%%%%%%%%%%%%%%%%%%%%%%%%%%%%%%%%%%%%%%%%%%%%%%%%%%%%%%%

\begin{proposition}
\label{pel-prop-non*}
Let $X$ be a Banach space. The following assertions are equivalent.
\begin{enumerate}
\item[(1$_q$)]  $X$ has the property \q{V}, i.e. there exists $C>0$ such that for any Banach space $Y$ and any operator $T\colon X\to Y$ $$\gamma(T)\leq C\cdot \uc(T).$$
\item[(1$'_q$)] There exists $C>0$ such that for every operator $T\colon X\to \ell^\infty$ $$\gamma(T)\leq C\cdot \uc(T).$$
\item[(2$_q$)]  There exists $C>0$ such that for each bounded $K\subset X^*$ $$\gamma(K)\leq C\cdot \eta(K).$$
\end{enumerate}
\end{proposition}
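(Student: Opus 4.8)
The plan is to prove the cycle of implications (1$_q$)\,$\Rightarrow$\,(1$'_q$)\,$\Rightarrow$\,(2$_q$)\,$\Rightarrow$\,(1$_q$). The first of these is immediate, as (1$'_q$) is nothing but the instance $Y=\ell^\infty$ of (1$_q$). The whole argument rests on a single computational lemma together with the quantitative Gantmacher theorem (\ref{q-gantmacher}), so I would prove that lemma first: for every operator $T\colon X\to Y$,
$$\uc(T)=\eta\big(T^*(B_{Y^*})\big).$$
The key point making this plausible is that, for a wuC sequence $(x_n)$, one has $\sup_{x^*\in T^*(B_{Y^*})}|x^*(x_n)|=\sup_{y^*\in B_{Y^*}}|y^*(Tx_n)|=\|Tx_n\|$, so both sides are suprema over the same family of wuC sequences, of the non-cauchyness $\ca$ of the partial sums $S_n=\sum_{i\le n}Tx_i$ and of the quantity $\limsup_n\|Tx_n\|$, respectively.

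One inequality is routine: from $\|Tx_n\|=\|S_n-S_{n-1}\|\le\sup_{k,l\ge n-1}\|S_k-S_l\|$ we get $\limsup_n\|Tx_n\|\le\ca\big((S_n)_n\big)$, whence $\eta(T^*(B_{Y^*}))\le\uc(T)$. The reverse inequality is the main obstacle, and I would handle it by a gliding-hump argument. Suppose $(x_n)$ is wuC with $\ca((S_n)_n)=\alpha$. Since $\sup_{k,l\ge n}\|S_k-S_l\|\ge\alpha$ for every $n$, I can, given $\eps>0$, select inductively disjoint consecutive blocks $B_p=(l_p,k_p]$ with $l_1<k_1\le l_2<k_2\le\cdots$ and $\|S_{k_p}-S_{l_p}\|\ge\alpha-\eps$. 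Putting $y_p=\sum_{i\in B_p}x_i$, the crucial observation is that $(y_p)$ is again wuC with $\sup_{x^*\in B_{X^*}}\sum_p|x^*(y_p)|\le1$, because grouping terms only decreases the sums $\sum|x^*(\cdot)|$; meanwhile $\|Ty_p\|=\|S_{k_p}-S_{l_p}\|\ge\alpha-\eps$. Hence $\eta(T^*(B_{Y^*}))\ge\limsup_p\|Ty_p\|\ge\alpha-\eps$, and letting $\eps\to0$ and taking the supremum over all admissible $(x_n)$ gives $\uc(T)\le\eta(T^*(B_{Y^*}))$.

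Granting the lemma, (2$_q$)\,$\Rightarrow$\,(1$_q$) is short. For an operator $T\colon X\to Y$ set $K=T^*(B_{Y^*})\subset X^*$; then $\gamma(K)=\gamma(T^*)$ by the convention on $\gamma$ of operators, so (\ref{q-gantmacher}) yields $\gamma(T)\le\gamma(T^*)=\gamma(K)$, assumption (2$_q$) yields $\gamma(K)\le C\cdot\eta(K)$, and the lemma yields $\eta(K)=\uc(T)$. Composing these gives $\gamma(T)\le C\cdot\uc(T)$, i.e. (1$_q$) with the same constant.

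For (1$'_q$)\,$\Rightarrow$\,(2$_q$), fix a bounded $K\subset X^*$. Since $\gamma(K)$ is a supremum over sequences drawn from $K$, for each $\eps>0$ I can choose $(x^*_n)\subset K$ with $\gamma(\{x^*_n:n\in\N\})\ge\gamma(K)-\eps$, and then define a bounded operator $T\colon X\to\ell^\infty$ by $Tx=(x^*_n(x))_n$. Two facts close the loop. First, because $\|Tx\|_\infty=\sup_n|x^*_n(x)|=\sup_{x^*\in\{x^*_n\}}|x^*(x)|$, the proof of the lemma applies verbatim with the generating set $\{x^*_n\}$ in place of $T^*(B_{(\ell^\infty)^*})$, giving $\uc(T)=\eta(\{x^*_n\})\le\eta(K)$ by monotonicity of $\eta$. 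Second, since $x^*_n=T^*e_n^*\in T^*(B_{(\ell^\infty)^*})$ for the coordinate functionals $e_n^*$, monotonicity of $\gamma$ and (\ref{q-gantmacher}) give $\gamma(\{x^*_n\})\le\gamma(T^*(B_{(\ell^\infty)^*}))=\gamma(T^*)\le2\gamma(T)$. Applying (1$'_q$) to $T$ and combining everything,
$$\gamma(K)-\eps\le\gamma(\{x^*_n\})\le2\gamma(T)\le2C\cdot\uc(T)\le2C\cdot\eta(K),$$
and letting $\eps\to0$ gives (2$_q$) with constant $2C$, completing the cycle.
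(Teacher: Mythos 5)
Your argument is correct: the identity $\uc(T)=\eta\big(T^*(B_{Y^*})\big)$ does hold (the easy direction from $\|Tx_n\|=\|S_n-S_{n-1}\|\le\sup_{k,l\ge n-1}\|S_k-S_l\|$, the hard direction from the blocking argument, which preserves the normalization $\sup_{x^*\in B_{X^*}}\sum_n|x^*(x_n)|\le 1$ since grouping only decreases these sums), and the two nontrivial implications follow from it exactly as you describe. The difference from the paper is one of packaging rather than substance. The paper does not prove Proposition~\ref{pel-prop-non*} directly: it first establishes the starred variant, Proposition~\ref{pel-prop-*}, in which $\gamma(T)$ is replaced by $\gamma(T^*)$ and the three assertions are shown equivalent \emph{with the same constant} $C$, and then deduces Proposition~\ref{pel-prop-non*} in one line via the quantitative Gantmacher theorem~(\ref{q-gantmacher}). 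Both inequalities of your lemma appear there, but buried inside two different implications (the gliding hump inside $(1'_q)_C\Rightarrow(2_q)_C$, the easy inequality inside $(2_q)_C\Rightarrow(1_q)_C$); extracting them as the standalone identity $\uc(T)=\eta(T^*(B_{Y^*}))$ is a clean and reusable reorganization. What your route gives up is constant tracking: the estimate $\gamma(\{x^*_n\})\le\gamma(T^*)\le 2\gamma(T)$ costs a factor $2$ in $(1'_q)\Rightarrow(2_q)$. This is harmless here, since the proposition only asserts the existence of some constant, but it is exactly the loss the paper avoids by formulating the sharp version with $\gamma(T^*)$ --- which is the reason the author calls the starred quantification the more natural one.
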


This proposition follows immediately from the next one and the quantitative version of Gantmacher's theorem (\ref{q-gantmacher}). The preceding and the following proposition are much alike, in the latter one $\gamma(T)$ is replaced by $\gamma(T^*)$. Then the three assertions are equivalent ``with the same constant'' $C>0$. Thus the quantification of the property (V) of the form
$$\gamma(T^*)\leq C\cdot\uc(T)
$$
seems to be more natural here.

%%%%%%%%%%%%%%%%%%%%%%%%%%%%%%%%%%%%%%%%%%%%%%%%%%%%%%%%%%%%%%%%%%%%%%%%%%%%%%%%

\begin{proposition}
\label{pel-prop-*}
Let $X$ be a Banach space and $C>0$. The following assertions are equivalent.
\begin{enumerate}
\item[(1$_q$)$_C$] For any Banach space $Y$ and any operator $T\colon X\to Y$ $$\gamma(T^*)\leq C\cdot \uc(T).$$
\item[(1$'_q$)$_C$] For every operator $T\colon X\to \ell^\infty$ $$\gamma(T^*)\leq C\cdot \uc(T).$$
\item[(2$_q$)$_C$] For each bounded $K\subset X^*$ $$\gamma(K)\leq C\cdot \eta(K).$$
\end{enumerate}
\end{proposition}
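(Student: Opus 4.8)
The backbone of the argument is the identity
$$\uc(T)=\eta\big(T^*(B_{Y^*})\big)\qquad\text{for every operator }T\colon X\to Y,$$
which translates statements about operators into statements about bounded subsets of $X^*$ and makes all three quantities speak a common language. I would isolate this as an auxiliary lemma and then run the cycle $(1_q)_C\Rightarrow(1'_q)_C\Rightarrow(2_q)_C\Rightarrow(1_q)_C$, taking care that every estimate is an equality so that the constant $C$ is never lost.

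To prove the identity, write $K=T^*(B_{Y^*})$ and note $\|Tx\|=\sup_{x^*\in K}|x^*(x)|$ for all $x\in X$, so that $\uc(T)$ and $\eta(K)$ are suprema over the very same family of normalized wuC sequences $(x_n)$. For $\eta(K)\le\uc(T)$ I would observe that the partial sums $S_n=\sum_{i\le n}Tx_i$ satisfy $\|S_k-S_{k-1}\|=\|Tx_k\|$, whence $\ca((S_n)_n)\ge\limsup_k\|Tx_k\|=\limsup_k\sup_{x^*\in K}|x^*(x_k)|$; a supremum over $(x_n)$ gives the claim. The reverse inequality $\uc(T)\le\eta(K)$ is the crux. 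Given a normalized wuC sequence with $\ca((S_n)_n)>t$, I would greedily extract pairwise disjoint consecutive intervals $I_1<I_2<\cdots$ of $\N$ with $\|\sum_{i\in I_j}Tx_i\|>t$, and set $y_j=\sum_{i\in I_j}x_i$. Disjointness of the intervals yields $\sup_{x^*\in B_{X^*}}\sum_j|x^*(y_j)|\le\sup_{x^*\in B_{X^*}}\sum_i|x^*(x_i)|\le1$, so $(y_j)$ is again a normalized wuC sequence, while $\|Ty_j\|>t$ forces $\eta(K)\ge\limsup_j\|Ty_j\|\ge t$. Letting $t\uparrow\uc(T)$ closes the lemma.

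With the lemma the implications are short. The implication $(1_q)_C\Rightarrow(1'_q)_C$ is trivial, $\ell^\infty$ being one admissible target. For $(2_q)_C\Rightarrow(1_q)_C$, given $T\colon X\to Y$ set $K=T^*(B_{Y^*})$; then $\gamma(T^*)=\gamma(K)\le C\,\eta(K)=C\,\uc(T)$, where the middle step is $(2_q)_C$ and the outer equalities are the definition of $\gamma(T^*)$ and the lemma. For $(1'_q)_C\Rightarrow(2_q)_C$, fix a bounded $K\subset X^*$ and $\eps>0$, and choose $(f_n)\subset K$ and $(\Phi_m)\subset B_{X^{**}}$ realizing $\gamma(K)$ up to $\eps$. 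Define $T\colon X\to\ell^\infty$ by $Tx=(f_n(x))_n$, which is bounded since $K$ is. As $T^*e_n^*=f_n$ with $e_n^*\in B_{(\ell^\infty)^*}$, the countable set $K_0=\{f_n:n\in\N\}$ lies in $T^*(B_{(\ell^\infty)^*})$, so $\gamma(T^*)\ge\gamma(K_0)\ge\gamma(K)-\eps$ by monotonicity of $\gamma$ and the choice of the witnessing sequences. On the other hand $\|Tx\|_\infty=\sup_{x^*\in K_0}|x^*(x)|$, so the same computation as in the lemma gives $\uc(T)=\eta(K_0)\le\eta(K)$. Applying $(1'_q)_C$ to $T$ yields $\gamma(K)-\eps\le\gamma(T^*)\le C\,\uc(T)\le C\,\eta(K)$, and $\eps\to0$ gives $(2_q)_C$.

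The main obstacle is the inequality $\uc(T)\le\eta(K)$ in the lemma: $\ca$ measures norms of arbitrarily long interval sums $\sum_{i\in I}Tx_i$, whereas $\eta$ only sees single terms $Tx_n$. The device that removes the gap is the passage from $(x_i)$ to the blocked sequence $(y_j)$, together with the fact that disjointness of the blocks preserves the normalization $\sup_{x^*\in B_{X^*}}\sum|x^*(\cdot)|\le1$; this is precisely what lets a long interval sum be re-read as a single term of a new admissible sequence. The only other point requiring vigilance is that the whole chain must run with a fixed $C$, which the lemma guarantees by replacing each comparison with an exact identity rather than an inequality with a multiplicative loss.
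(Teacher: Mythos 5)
Your proof is correct and follows essentially the same route as the paper: the cycle $(1_q)_C\Rightarrow(1'_q)_C\Rightarrow(2_q)_C\Rightarrow(1_q)_C$, the operator $T\colon X\to\ell^\infty$ built from a countable subset of $K$ witnessing $\gamma(K)$, and the blocking device $y_j=\sum_{i\in I_j}x_i$ are all exactly the paper's ingredients. The only difference is organizational -- you isolate the identity $\uc(T)=\eta(T^*(B_{Y^*}))$ as a lemma, whereas the paper proves its two halves inline within the two nontrivial implications.
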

\begin{proof}
We follow Pe\l czy\' nski's original proof \cite[Prop.\,1]{pel}, it only needs to be done more carefully.
The implication $(1_q)_C \Rightarrow (1'_q)_C$ is obvious.

Let us prove $(1'_q)_C \Rightarrow (2_q)_C$. Let $K$ be a bounded subset of $X^*$ and $\delta<\gamma(K)$. From the definition of $\gamma$ it is easily seen that a sequence $(x^*_n)$ in $K$ exists such that $\gamma(\{x^*_n:\,n\in\N\})>\delta$. Let us define $T\colon X\to \ell^\infty$ by $T(x)(n)=x^*_n(x)$, $n\in\N$, $x\in X$. For each $n\in\N$ set $p_n\big((a_k)\big)=a_n$, $(a_k)\in\ell^\infty$. Then $p_n\in(\ell^\infty)^*$, $\|p_n\|=1$. Moreover, $T^*p_n=x^*_n$, because for $x\in X$ we have $T^*p_n(x)=p_n(Tx)=x^*_n(x)$. Thus
$$\gamma(T^*)=\gamma(T(B_{(\ell^\infty)^*}) \geq \gamma(\{T^*p_n:\,n\in\N\}) = \gamma(\{x^*_n:\,n\in\N\})>\delta.$$
From $(1'_q)_C$ it follows that $\uc(T)>\frac{\delta}{C}$. By the definition of the quantity $\uc$ there is a wuC series $\sum x_n$ in $X$ with $\sup_{x^*\in B_{X^*}}\sum |x^*(x_n)|\leq 1$ such that
$\ca\left(\big(\sum_{i=1}^n Tx_i\big)_n\right)>\frac{\delta}{C}$.
The definition of $\ca$ gives indices $k_1<l_1<k_2<l_2<\dots$ such that for each $n\in\N$
\begin{equation}
\label{odhad1}
\begin{aligned}
\frac{\delta}{C} &< \left\|\sum_{i=k_n}^{l_n} Tx_i\right\|_{\ell^\infty} = \sup_{m\in\N}\left|\sum_{i=k_n}^{l_n} T(x_i)(m)\right| \\
&= \sup_{m\in\N}\left|\sum_{i=k_n}^{l_n} x^*_m(x_i)\right| \leq \sup_{x^*\in K}\left|\sum_{i=k_n}^{l_n} x^*(x_i)\right|. \\
\end{aligned}
\end{equation}
Let us define
$\widetilde{x}_n=\sum_{i=k_n}^{l_n}x_i$, $n\in\N$.
Then the series $\sum_n \widetilde{x}_n$ is wuC since $\sum_i x_i$ is wuC and for every $x^*\in X^*$
$$\sum_{n=1}^\infty|x^*(\widetilde{x}_n)|
= \sum_{n=1}^\infty\left|\sum_{i=k_n}^{l_n}x^*(x_i)\right|
\leq \sum_{n=1}^\infty\sum_{i=k_n}^{l_n}|x^*(x_i)|
\leq \sum_{i=1}^\infty |x^*(x_i)|.$$
Moreover,
$$\sup_{x^*\in B_{X^*}} \sum_{n=1}^\infty|x^*(\widetilde{x}_n)| \leq \sup_{x^*\in B_{X^*}}\sum_{i=1}^\infty |x^*(x_i)|\leq 1.$$
From (\ref{odhad1}) we have for each $n\in\N$
$$\sup_{x^*\in K}|x^*(\widetilde{x}_n)| = \sup_{x^*\in K}\left|\sum_{i=k_n}^{l_n} x^*(x_i)\right| > \frac{\delta}{C},$$
and so $$\limsup_{n\in\N}\sup_{x^*\in K} x^*(\widetilde{x}_n) > \frac{\delta}{C}.$$
Hence $\eta(K) > \frac{\delta}{C}$. As $\delta<\gamma(K)$ has been chosen arbitrarily, we obtain $\gamma(K)\leq C\cdot\eta(K)$.

It remains to prove the implication $(2_q)_C \Rightarrow (1_q)_C$. Let $Y$ be a Banach space and $T\colon X\to Y$ an operator. Let us fix $\delta<\gamma(T^*) = \gamma(T^*(B_{Y^*}))$. Set $K=\gamma(T^*(B_{Y^*}))$. Then $K$ is a bounded subset of $X^*$ and from $(2_q)_C$ we have $C\cdot\eta(K)\geq\gamma(K)>\delta$. By the definition of $\eta$ there is a wuC series $\sum x_n$ in X with $\sup_{x^*\in B_{X^*}} \sum |x^*(x_n)|\leq 1$ such that
$$
\begin{aligned}
\frac{\delta}{C} &< \limsup_n \sup_{x^*\in K} |x^*(x_n)| = \limsup_n \sup_{y^*\in B_{Y^*}} |T^*y^*(x_n)| \\
&= \limsup_n \sup_{y^*\in B_{Y^*}} |y^*(Tx_n)| = \limsup_n \|Tx_n\|.
\end{aligned}
$$
Thus $\ca\left(\big(\sum_{i=1}^n Tx_i\big)_n\right)>\frac{\delta}{C}$
and hence $\uc(T)\geq \frac{\delta}{C}$. Since $\delta<\gamma(T^*)$ is arbitrary, we conclude that $\gamma(T^*)\leq C\cdot \uc(T)$.
\end{proof}

%%%%%%%%%%%%%%%%%%%%%%%%%%%%%%%%%%%%%%%%%%%%%%%%%%%%%%%%%%%%%%%%%%%%%%%%%%%%%%%%

The following proposition provides an analogous characterization of the property \qos{V}.

\begin{proposition}
\label{pel-prop-*-omega}
Let $X$ be a Banach space and $C>0$. The following assertions are equivalent.
\begin{enumerate}
\item[(1$_q^{\,\omega}$)$_C$] For any Banach space $Y$ and any bounded linear operator $T\colon X\to Y$ $$\omega(T^*)\leq C\cdot \uc(T).$$
\item[(2$_q^{\,\omega}$)$_C$] For each bounded $K\subset X^*$ $$\omega(K)\leq C\cdot \eta(K).$$
\end{enumerate}
\end{proposition}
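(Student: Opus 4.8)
The plan is to follow the proof of Proposition~\ref{pel-prop-*} almost verbatim, with the measure $\omega$ in place of $\gamma$ throughout, but with one essential modification. In the $\gamma$-proof the step toward $(2_q)_C$ begins by reducing a bounded set $K\subset X^*$ to a countable subset (a single sequence on which $\gamma$ is already large) and then maps $X$ into $\ell^\infty$. This reduction is available because $\gamma$ is governed by sequences, whereas $\omega$ is not; so I cannot pass to $\ell^\infty$ and must instead map $X$ into $\ell^\infty(K)$, the space of bounded scalar functions on the whole index set $K$. This is precisely why the present proposition carries no analogue of the auxiliary assertion $(1'_q)_C$.

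I would prove $(2_q^\omega)_C \Rightarrow (1_q^\omega)_C$ exactly as in the corresponding step of Proposition~\ref{pel-prop-*}, and this is the easy direction. Given an operator $T\colon X\to Y$, put $K=T^*(B_{Y^*})$, a bounded subset of $X^*$ with $\omega(T^*)=\omega(K)$. For any normalized wuC series $\sum x_n$, meaning $\sup_{x^*\in B_{X^*}}\sum_n|x^*(x_n)|\le 1$, one has $\sup_{x^*\in K}|x^*(x_n)|=\sup_{y^*\in B_{Y^*}}|y^*(Tx_n)|=\|Tx_n\|$; since consecutive partial sums of $\sum_i Tx_i$ differ by $Tx_n$, the definition of $\ca$ gives $\ca\big((\sum_{i=1}^n Tx_i)_n\big)\ge\limsup_n\|Tx_n\|$. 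Taking the supremum over all such series yields $\eta(K)\le\uc(T)$, whence $\omega(T^*)=\omega(K)\le C\cdot\eta(K)\le C\cdot\uc(T)$ by $(2_q^\omega)_C$.

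For the reverse implication $(1_q^\omega)_C \Rightarrow (2_q^\omega)_C$, I fix a bounded $K\subset X^*$ and $\delta<\omega(K)$, and define $T\colon X\to\ell^\infty(K)$ by $(Tx)(x^*)=x^*(x)$; boundedness of $K$ makes $T$ a bounded operator with $\|Tx\|=\sup_{x^*\in K}|x^*(x)|$. For each $x^*\in K$ the point evaluation $e_{x^*}$ lies in $B_{\ell^\infty(K)^*}$ and satisfies $T^*e_{x^*}=x^*$, so $K\subseteq T^*(B_{\ell^\infty(K)^*})$, and monotonicity of $\omega$ gives $\omega(T^*)\ge\omega(K)>\delta$. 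Then $(1_q^\omega)_C$ forces $\uc(T)>\delta/C$, so there is a normalized wuC series $\sum x_n$ with $\ca\big((\sum_{i=1}^n Tx_i)_n\big)>\delta/C$. Following the proof of Proposition~\ref{pel-prop-*}, I would then extract indices $k_1<l_1<k_2<l_2<\cdots$ with $\|\sum_{i=k_n}^{l_n}Tx_i\|>\delta/C$ and set $\widetilde x_n=\sum_{i=k_n}^{l_n}x_i$; the series $\sum_n\widetilde x_n$ is again a normalized wuC series by the same telescoping estimate as there, and $\sup_{x^*\in K}|x^*(\widetilde x_n)|=\|T\widetilde x_n\|>\delta/C$, so $\eta(K)\ge\delta/C$. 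Letting $\delta\uparrow\omega(K)$ gives $\omega(K)\le C\cdot\eta(K)$.

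The only genuinely new point, and the step I expect to require the most care, is the construction in the last paragraph: because $\omega$ does not reduce to sequences, the target copy of $\ell^\infty$ must be indexed by the entire set $K$, and one must check that the point evaluations $e_{x^*}$ recover all of $K$ through $T^*$, so that the clean monotonicity inequality $\omega(T^*)\ge\omega(K)$ becomes available. Everything else — the identity $\|Tx_n\|=\sup_{x^*\in K}|x^*(x_n)|$, the blocking trick converting partial sums of $\sum Tx_i$ into single terms of a new wuC series, and the normalization bound for $\sum_n\widetilde x_n$ — is identical to the proof of Proposition~\ref{pel-prop-*} and needs no change.
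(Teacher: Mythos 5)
Your proposal is correct and follows essentially the same route as the paper: the easy direction is the $\omega$-for-$\gamma$ substitution in the argument of Proposition \ref{pel-prop-*}, and the converse uses exactly the paper's construction of $T\colon X\to\ell^\infty(K)$ with point evaluations satisfying $T^*F_{x^*}=x^*$, followed by the same blocking of the wuC series. The one point you single out as the essential modification --- indexing the target $\ell^\infty$ by all of $K$ because $\omega$ is not determined by sequences --- is precisely the paper's reason for the absence of an analogue of $(1'_q)_C$ here.
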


\begin{proof}
This proposition has the ``same'' proof as the previous one. The implication $(2_q^{\,\omega})_C \Rightarrow (1_q^{\,\omega})_C$ can be proven exactly the same way, we simply substitute $\omega$ for $\gamma$.

As for the converse implication, suppose that $(1_q^{\,\omega})_C$ holds, and let $K$ be a bounded subset of $X^*$ and $\delta<\omega(K)$. Let us define $T\colon X\to \ell^\infty(K)$ by $Tx(x^*)=x^*(x)$, $x^*\in K$, $x\in X$. For each $x^*\in K$ set $F_{x^*}(f)=f(x^*)$, $f\in\ell^\infty(K)$. Then $F_{x^*}\in(\ell^\infty(K))^*$, $\|F_{x^*}\|=1$, and $T^*F_{x^*}=x^*$, $x^*\in X^*$. Hence
$$\omega(T^*) = \omega(T^*(B_{(\ell^\infty(K))^*})\geq \omega(\{T^*F_{x^*}:\,x^*\in K\}) %= \omega(\{x^*:\,x^*\in K\})
=\omega(K)>\delta.$$
By $(1_q^{\,\omega})_C$, $\uc(T)>\frac{\delta}{C}$. We then continue just as in the proof of the implication $(1'_q)_C\Rightarrow(2_q)_C$ in the previous proposition to get $(2_q^{\,\omega})_C$.
\end{proof}

From the estimates (\ref{equiv}) and (\ref{non-equiv}) it follows that if some Banach space $X$ satisfies the condition $(1_q^{\,\omega})_C$ from the previous proposition \ref{pel-prop-*-omega}, then it also satisfies the condition $(1_q)_{2C}$ from Proposition \ref{pel-prop-*}.

Propositions \ref{pel-prop-non*}, \ref{pel-prop-*}, and \ref{pel-prop-*-omega} characterize only the properties \q{V} and \qos{V}. We do not have a similar characterization of the property \qo{V}.

%%%%%%%%%%%%%%%%%%%%%%%%%%%%%%%%%%%%%%%%%%%%%%%%%%%%%%%%%%%%%%%%%%%%%%%%%%%%%%%%
\subsection{Characterization of uc operators and its consequence}
%%%%%%%%%%%%%%%%%%%%%%%%%%%%%%%%%%%%%%%%%%%%%%%%%%%%%%%%%%%%%%%%%%%%%%%%%%%%%%%%

The following theorem is a well known characterization of unconditionally converging operators due to Pe\l czy\' nski (see e.g. \cite[p. 54, Exercise 8]{diestel}). It yields another characterization of the property (V). Since this result has its quantitative version (Theorem \ref{fixc0-quant} below), it gives another characterization of a quantitative version of the property (V).

\begin{theorem}
\label{char-uc}
Let $X$, $Y$ be Banach spaces and $T\colon X\to Y$ an operator. Then $T$ is unconditionally converging if and only if it does not fix any copy of $c_0$, i.e. there is no subspace $X_0\subset X$ isomorphic to $c_0$ such that $T\restriction_{X_0}$ is an isomorphism.
\end{theorem}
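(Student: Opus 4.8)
The plan is to prove Theorem \ref{char-uc} by establishing both implications, each via contraposition, exploiting the tight connection between $c_0$ and weakly unconditionally Cauchy series. The underlying dictionary is the classical Bessaga--Pe\l czy\' nski fact: a series $\sum x_n$ is wuC if and only if the partial-sum operator $(t_n)\mapsto \sum t_n x_n$ extends to a bounded operator from $c_0$ into $X$, and a formal series $\sum x_n$ with $\inf_n\|x_n\|>0$ is equivalent to the unit vector basis of $c_0$ precisely when it is wuC but fails to be unconditionally convergent. I would take this characterization of $c_0$-embeddings as the main tool and reduce both directions of the theorem to it.

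For the easier direction, suppose $T$ fixes a copy of $c_0$; that is, there is a subspace $X_0\cong c_0$ on which $T$ restricts to an isomorphism onto its image. Let $(e_n)$ denote the images in $X_0$ of the unit vector basis of $c_0$ under the isomorphism $c_0\to X_0$. Then $\sum e_n$ is wuC (being the image of a $c_0$-basis) but $\sum T e_n$ is not unconditionally convergent: since $T\restriction_{X_0}$ is an isomorphism, the sequence $(Te_n)$ is again equivalent to the $c_0$-basis, so $\sum Te_n$ is wuC but not convergent. Hence $T$ is not unconditionally converging, giving the contrapositive of the ``only if'' statement.

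For the converse, suppose $T$ is not unconditionally converging. Then there is a wuC series $\sum x_n$ in $X$ whose image $\sum T x_n$ fails to converge. By passing to a subsequence of blocks $\widetilde x_k=\sum_{i=p_k}^{q_k} x_i$ along a Cauchy-defect, I would arrange that $\inf_k\|T\widetilde x_k\|=:\alpha>0$ while $\sum\widetilde x_k$ remains wuC (blocking preserves the wuC property, exactly as in the block estimate used in the proof of Proposition \ref{pel-prop-*}). Now $\sum T\widetilde x_k$ is wuC in $Y$ with terms bounded away from zero, so by Bessaga--Pe\l czy\' nski it admits a further subsequence $(T\widetilde x_{k_j})$ equivalent to the $c_0$-basis. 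The serious bookkeeping is to pull this back: I want the corresponding $(\widetilde x_{k_j})$ in $X$ to also be equivalent to the $c_0$-basis and to span a subspace $X_0$ on which $T$ is an isomorphism. Upper $c_0$-estimates for $(\widetilde x_{k_j})$ come for free from wuC-ness in $X$, and the lower estimate is supplied by $T$: for scalars $(t_j)$ one has $\|\sum t_j\widetilde x_{k_j}\|\ge \|T\|^{-1}\|\sum t_j T\widetilde x_{k_j}\|\gtrsim \|T\|^{-1}\sup_j|t_j|$, using the lower $c_0$-equivalence on the image side. These two estimates together show $(\widetilde x_{k_j})$ is equivalent to the $c_0$-basis, so $X_0=\overline{\span}\{\widetilde x_{k_j}\}\cong c_0$, and the lower estimate is precisely the statement that $T\restriction_{X_0}$ is an isomorphism onto its image.

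The main obstacle I anticipate is the converse direction, specifically transferring the $c_0$-structure from the image back to the domain while simultaneously controlling $T$ from below on the resulting subspace. The delicate point is that equivalence to the $c_0$-basis on the image $(T\widetilde x_{k_j})$ gives a two-sided estimate there, but on the domain side wuC-ness only yields the \emph{upper} $c_0$-estimate automatically; the lower estimate must be imported through $T$, and one has to check these interlock consistently (same subsequence, same constants) so that $T\restriction_{X_0}$ is genuinely bounded below rather than merely injective. Once the block-and-subsequence selection is set up so that $(\widetilde x_{k_j})$ and its image are simultaneously $c_0$-equivalent with compatible constants, the conclusion that $X_0\cong c_0$ and $T$ is an isomorphism on $X_0$ follows directly, completing the contrapositive of the ``if'' statement.
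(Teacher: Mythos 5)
Your proposal is correct and follows essentially the same route as the paper, whose proof of this equivalence is embedded in the quantitative Theorem \ref{fixc0-quant}: push the $c_0$-basis forward through the fixed copy for the easy direction, and for the converse block the non-convergent image series along its Cauchy defect and invoke the Bessaga--Pe\l czy\'nski machinery (weak nullity plus norms bounded below plus wuC) to produce a copy of $c_0$ on which $T$ is an isomorphism. The only, harmless, difference is that you establish $c_0$-equivalence on the image side and import the lower estimate back to the domain through $T$, whereas the paper extracts basic subsequences and applies the wuC-plus-bounded-below criterion to $(z_n)$ and $(Tz_n)$ separately.
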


To quantify this proposition we will need the quantity $\fc$ which measures the failure of the condition that $T$ does not fix a copy of $c_0$. For a bounded linear operator $T\colon X\to Y$ we set
\[
\begin{split}
\fc(T) = \sup\Big\{
(\|U\|\|V\|)^{-1}:~&\exists X_0\subset X\text{ such that } T\restriction_{X_0} \text{ is an isomorphism} \\[-5 pt]
&\text{onto } T(X_0)\text{, and }~(T\restriction_{X_0})^{-1}=U\circ V \text{ for some}\\[-5 pt]
&\text{onto isomorphisms } U\colon c_0\to X_0,~V\colon T(X_0)\to c_0 \Big\}.
\end{split}
\]
If the set on the right is empty, we set $\fc(T)=0$. This happens if and only if $T$ does not fix a copy of $c_0$, for otherwise the set contains $(\|U\|\|V\|)^{-1}$, where $U\colon c_0\to X_0\subset X$ is an isomorphism onto $X_0$ such that $T\restriction_{X_0}$ is an isomorphism, and $V=(T\circ U)^{-1}$.

Let us explain why may this quantity be considered a measure of the failure of the condition that $T$ does not fix a copy of $c_0$.
First of all, note that $\fc(cT)=c\,\fc(T)$, $c>0$. This is important, for we need $\fc$ to be positively homogeneous like all the other quantities that we use.
Now, suppose that $T$ is an operator of norm $1$ which fixes a copy of $c_0$. Let $X_0$ be a subspace of $X$ isomorphic to $c_0$ such that $T\restriction_{X_0}$ is an isomorphism onto $T(X_0)\subset Y$. If we wanted to measure how ``nice'' is this isomorphism, we would have to take a closer look at $\|(T\restriction_{X_0})^{-1}\|$. If it equals $1$, then $T\restriction_{X_0}$ is an isometry. The greater is $\|(T\restriction_{X_0})^{-1}\|$, the more ``deforming'' is the isomorphism $T\restriction_{X_0}$. We thus see that $\|(T\restriction_{X_0})^{-1}\|^{-1}$ is a natural measure of ``niceness'' of $T\restriction_{X_0}$. In our case, we would like to measure how nice is the isomorphism $T\restriction_{X_0}$ and how nice copy of $c_0$ is $X_0$ in $X$ simultaneously. The operator $(T\restriction_{X_0})^{-1}$ factors through $c_0$ in a way that there are isomorphisms $U$ and $V$ like in the definition of $\fc$ such that $(T\restriction_{X_0})^{-1}=U\circ V$. So we replace $\|(T\restriction_{X_0})^{-1}\|$ by $\|U\| \|V\|$. The quantity $(\|U\|\|V\|)^{-1}$ not only measures ``niceness'' of $U\circ V$, but it also takes into account the isomorphism $U\colon c_0\to X_0$ itself. Eventually, the supremum over all suitable $X_0$, $U$ and $V$ is taken to measure how nicest an isomorphism on some nice copy of $c_0$ can we get.

The following theorem is a quantitative version of Theorem \ref{char-uc}. Both implications of the equivalence are replaced by inequalities between relevant measures.

\begin{theorem}
\label{fixc0-quant}
Let $X$ be a Banach space. For every Banach space $Y$ and every bounded linear operator $T\colon X\to Y$
$$\frac12\uc(T) \leq \fc(T) \leq \uc(T).$$
\end{theorem}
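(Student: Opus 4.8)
The plan is to prove the two inequalities separately. The right-hand one, $\fc(T)\le\uc(T)$, is the quantitative form of the implication ``$T$ fixes $c_0$ $\Rightarrow$ $T$ is not uc'' from Theorem \ref{char-uc} and is elementary; the left-hand one, $\tfrac12\uc(T)\le\fc(T)$, is the quantitative form of ``$T$ not uc $\Rightarrow$ $T$ fixes $c_0$'' and carries all the real work.

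For $\fc(T)\le\uc(T)$ I would fix any admissible configuration $X_0$, $U\colon c_0\to X_0$, $V\colon T(X_0)\to c_0$ with $(T\restriction_{X_0})^{-1}=U\circ V$; composing this identity appropriately gives $T\restriction_{X_0}\circ U=V^{-1}$. Put $x_n=U(e_n)/\|U\|$, where $(e_n)$ is the unit vector basis of $c_0$. A direct computation using $\sum_n|x^*(Ue_n)|=\|U^*x^*\|_{\ell^1}$ shows $\sup_{x^*\in B_{X^*}}\sum_n|x^*(x_n)|=\|U^*\|/\|U\|=1$, so $(x_n)$ is admissible in the definition of $\uc(T)$. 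Since $\sum_{i=1}^n Tx_i=\|U\|^{-1}V^{-1}(s_n)$ with $s_n=\sum_{i=1}^n e_i$, and since $\|s_l-s_k\|_\infty=1$ for $k\ne l$ forces $\|V^{-1}(s_l-s_k)\|\ge\|V\|^{-1}$, one gets $\ca\big((\sum_{i=1}^nTx_i)_n\big)\ge(\|U\|\|V\|)^{-1}$. Taking the supremum over all configurations yields $\uc(T)\ge\fc(T)$.

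For $\tfrac12\uc(T)\le\fc(T)$ I would fix $\delta<\uc(T)$ and produce, for every $\eps>0$, an admissible configuration with $(\|U\|\|V\|)^{-1}\ge\delta/2-\eps$. By the definitions of $\uc(T)$ and of $\ca$ there is a wuC series $\sum x_n$ with $\sup_{x^*\in B_{X^*}}\sum_n|x^*(x_n)|\le1$ such that, after passing to consecutive blocks $u_j=\sum_{i=k_j+1}^{l_j}x_i$, one has $\|Tu_j\|>\delta$ for all $j$; exactly as in the estimate for $\widetilde{x}_n$ in the proof of Proposition \ref{pel-prop-*}, the block series $\sum u_j$ is again wuC with constant $\le1$. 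Writing $v_j=Tu_j$, the series $\sum v_j$ is wuC (so $v_j\to0$ weakly) and $\|v_j\|>\delta$. The heart of the argument is then a quantitative $c_0$-extraction from $(v_j)$: using that $(v_j)$ is weakly null and bounded below, I would invoke the Bessaga--Pe\l czy\'nski selection principle to pass to a basic subsequence with basis constant $\le1+\eps$, for which the monotonicity estimate gives $\dist\big(v_{j_k},\span\{v_{j_i}:i<k\}\big)\ge\tfrac12(1+\eps)^{-1}\|v_{j_k}\|>\delta/2-\eps$ --- this is precisely where the factor $\tfrac12$ originates. Interleaving this selection with a tail estimate (each $\sum_i|y^*(v_i)|$ is finite) I would choose the subsequence together with functionals $y_k^*\in B_{Y^*}$ so that $y_k^*$ vanishes on $\span\{v_{j_i}:i<k\}$, $y_k^*(v_{j_k})\ge\delta/2-\eps$, and $\sum_{k'>k}|y_k^*(v_{j_{k'}})|<\eps$. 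Testing $\sum_k t_k v_{j_k}$ against $y_{k_0}^*$ for an index $k_0$ realizing $\|(t_k)\|_\infty$ then yields the lower estimate $\|\sum_k t_k v_{j_k}\|\ge(\delta/2-2\eps)\|(t_k)\|_\infty$, while the wuC constant supplies the matching upper estimate $\|\sum_k t_k v_{j_k}\|\le\|(t_k)\|_\infty$.

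To finish I would assemble the factorization. Define $U\colon c_0\to X$ by $U(e_k)=u_{j_k}$ and set $X_0=\overline{U(c_0)}$; the wuC bound gives $\|U\|\le1$. The lower estimate shows that $T\restriction_{X_0}$ is bounded below, hence an isomorphism onto the closed subspace $T(X_0)=\overline{\span}\{v_{j_k}\}$, and that $V^{-1}:=T\restriction_{X_0}\circ U\colon c_0\to T(X_0)$, $e_k\mapsto v_{j_k}$, satisfies $\|V\|\le(\delta/2-2\eps)^{-1}$. One then verifies $(T\restriction_{X_0})^{-1}=U\circ V$, so this is an admissible configuration with $(\|U\|\|V\|)^{-1}\ge\delta/2-2\eps$; letting $\eps\to0$ and then $\delta\uparrow\uc(T)$ gives $\fc(T)\ge\tfrac12\uc(T)$. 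I expect the main obstacle to be the quantitative extraction step: arranging simultaneously the exact vanishing on the previous span (which kills the ``past'' cross terms) and the smallness of the forward tails, while keeping the distance bound, and hence $y_k^*(v_{j_k})$, as close to $\delta/2$ as desired. Everything else is routine bookkeeping.
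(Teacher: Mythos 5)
Your proposal is correct and follows essentially the same route as the paper: the right-hand inequality is tested on the series $\sum U(e_n)/\|U\|$ exactly as in the paper's proof, and the left-hand inequality proceeds by blocking, renormalizing the wuC constant to $1$, and applying the Bessaga--Pe\l czy\'nski selection principle to the weakly null, norm-bounded-below sequence $(Tu_j)$ to obtain a basic subsequence with basis constant $\le 1+\eps$, the factor $\tfrac12$ arising from that basis constant in both arguments and yielding the same bounds $\|U\|\le 1$, $\|V\|\le 2/\delta$. The only divergence is in the final estimate of $\|V\|$: where the paper uses the basis projections directly via $|a_n|\,\|Tz_n\|\le 2\,\bc(Tz_k)\,\big\|\sum_k a_kTz_k\big\|$, you reach the same bound through Hahn--Banach functionals annihilating the past span together with a gliding-hump control of the forward tails --- a valid but slightly more laborious variant, since the tail control compensates for information that the basis-projection inequality already provides.
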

\begin{proof}
Let us start with the second inequality. If $\fc(T)=0$, it holds trivially. Suppose that $\fc(T)>0$, i.e. $T$ fixes a copy of $c_0$. Take $X_0$ a subspace of $X$ isomorphic to $c_0$ and $U\colon c_0\to X_0$, $V\colon T(X_0)\to c_0$ onto isomorphisms which satisfy $(T\restriction_{X_0})^{-1}=U\circ V$. Is it enough to show that $\uc(T) \geq (\|U\| \|V\|)^{-1}$.

For the series $\sum e_n$ in $c_0$ we have
$$\sup_{x^*\in B_{(c_0)^*}} \sum |x^*(e_n)| = \sup_{(a_n)\in B_{\ell^1}} \sum |a_n| = 1$$
and $\ca\left(\big(\sum_{i=1}^n e_i\big)_n \right) = 1$.
Set $f_n=\frac{1}{\|U\|} U e_n$, $n\in\N$. Then $\sum f_n$ is a wuC series in $X_0\subset X$, since $\sum e_n$ is wuC and $U$ is continuous. We have even
$$\sup_{x^*\in B_{X^*}} \sum |x^*(f_n)| = \sup_{x^*\in B_{X^*}} \sum \left|\left(\tfrac{1}{\|U\|} x^*\circ U\right)(e_n)\right| \leq \sup_{y^*\in B_{(c_0)^*}} \sum |y^*(e_n)| = 1.$$
Moreover,
$$
\begin{aligned}
\ca\left(\bigg(\sum_{i=1}^n T f_i\bigg)_n\right) &= \inf_{n\in\N} \sup_{k>l\geq n} \left\|\sum_{i=l+1}^k T\left(\tfrac{1}{\|U\|}Ue_i \right)\right\| \\
&= \frac{1}{\|U\| \|V\|} \inf_{n\in\N} \sup_{k>l\geq n} \|(T\circ U)^{-1}\| \left\|\sum_{i=l+1}^k (T\circ U)e_i\right\| \\
&\geq (\|U\| \|V\|)^{-1} \inf_{n\in\N} \sup_{k>l\geq n} \left\|\sum_{i=l+1}^k e_i\right\| \\
&= (\|U\| \|V\|)^{-1} \ca\left(\bigg(\sum_{i=1}^n e_i\bigg)_n\right) \\
&= (\|U\| \|V\|)^{-1}.
\end{aligned}
$$
It follows that $\uc(T) \geq (\|U\| \|V\|)^{-1}$, which is what we need.

We proceed to show the inequality $\uc(T)\leq 2\,\fc(T)$. It is trivial if $\uc(T)=0$. Suppose that $\uc(T)>0$ and fix $0<\delta<\uc(T)$. First we find $\eps>0$ satisfying $\uc(T)>\delta(1+\eps)$, and we set $\delta'=\delta(1+\eps)$. The definition of $\uc(T)$ gives a wuC series $\sum x_n$ in $X$ with $\sup_{x^*\in B_{X^*}} \sum |x^*(x_n)| \leq 1$ such that $\ca\left(\left(\sum_{i=1}^n T x_i\right)_n\right) > \delta'$. By the definition of the quantity $\ca$ we find indices $k_1<l_1<k_2<l_2<\dots$ \ such that $\left\|\sum_{i=k_n}^{l_n} Tx_i \right\| > \delta'$, $n\in\N$. Let us set $\widetilde{x}_n = \sum_{i=k_n}^{l_n} x_i$, $n\in\N$. Then $\sum \widetilde{x}_n$ is a wuC series in $X$ with
$$\sup_{x^*\in B_{X^*}} \sum_{n=1}^\infty |x^*(\widetilde{x}_n)| \leq \sup_{x^*\in B_{X^*}} \sum_{n=1}^\infty |x^*(x_n)| \leq 1.$$
For each $n\in\N$ we have $\|T \widetilde{x}_n\| > \delta'$, and so $\|\widetilde{x}_n\| > \frac{\delta'}{\|T\|}>0$. The series $\sum \widetilde x_n$ is wuC and therefore $\widetilde{x}_n\to 0$ weakly. By \cite[Proposition 1.5.4]{kalton}, there is a subsequence $(\widetilde{x}_{n_k})$ of $(\widetilde{x}_n)$ which is basic.
Since $T\widetilde{x}_{n_k}\to 0$ weakly by the continuity of $T$, and $\inf\{\|T\widetilde{x}_{n_k}\|:\,k\in\N\} \geq \delta'>0$, we can use theorem \cite[Proposition 1.5.4]{kalton} again to obtain a subsequence $(z_m)$ of $(\widetilde{x}_{n_k})$ such that $(T z_m)$ is a basic sequence in $Y$ with a basic constant $\bc(T z_m)< 1+\eps$.

Since $(z_n)$ is a basic sequence in $X$ for which $\sum z_n$ is wuC, and $\inf\{\|z_n\|:\,n\in\N\}>0$, $(z_n)$ is equivalent to the canonical basis of $c_0$ by \cite[Theorem 6.6]{morrison}. For the same reason the sequence $(T z_n)$ in $Y$ is also equivalent to the canonical basis of $c_0$. Hence both $\overline{\span}\{z_n:\,n\in\N\}$ and $\overline{\span}\{T z_n:\,n\in\N\}$ are isomorphic to $c_0$, and $T \restriction_{\overline{\span}\{z_n:\,n\in\N\}}$ is an isomorphism onto $\overline{\span}\{T z_n:\,n\in\N\}$.

Let us set $X_0=\overline{\span}\{z_n:\,n\in\N\}$ and define $U\colon c_0 \to X_0$ by $U(e_n)=z_n$, $n\in\N$. Then $U$ is an onto isomorphism. Further, set $V=(T\circ U)^{-1}$. We will prove that $(\|U\| \|V\|)^{-1} \geq \frac{\delta}{2}$.
For $(a_n)\in c_0$ we have
$$
\begin{aligned}
\left\|U\big((a_n)\big)\right\|&=\left\|\sum_{n=1}^\infty a_n z_n \right\| = \sup_{x^*\in B_{X^*}}\left|x^*\left(\sum_{n=1}^\infty a_n z_n\right)\right| \\
&\leq \sup_{x^*\in B_{X^*}} \sum_{n=1}^\infty |a_n||x^*(z_n)| \leq \sup_{n\in\N}|a_n| \sup_{x^*\in B_{X^*}} \sum_{n=1}^\infty |x^*(z_n)| \\
&\leq \|(a_n)\| \sup_{x^*\in B_{X^*}} \sum_{i=1}^\infty |x^*(\widetilde{x}_i)| \leq \|(a_n)\|,
\end{aligned}$$
and hence $\|U\| \leq 1$. If $(a_n)\in c_0$, we also have for each $n\in\N$
$$
\begin{aligned}
\delta'|a_n| &\leq |a_n| \|T z_n\| = \|a_n T z_n\| = \left\|\sum_{i=1}^n a_i T z_i - \sum_{i=1}^{n-1} a_i T z_i\right\| \\
&\leq \left\|\sum_{i=1}^n a_i T z_i\right\| + \left\|\sum_{i=1}^{n-1} a_i T z_i\right\|  \leq 2 \bc(T z_k) \left\|\sum_{k=1}^\infty a_k T z_k\right\| \\
&= 2 \bc(T z_k) \left\|\big(T\circ U\big)\big((a_k)\big)\right\|,
\end{aligned}
$$
which gives
$$
\begin{aligned}
\|(a_n)\| &= \sup_{n\in\N}|a_n| \leq \frac{2 \bc(T z_n)}{\delta'} \left\|\big(T\circ U\big)\big((a_n)\big)\right\| \\
&\leq \frac{2(1+\eps)}{\delta(1+\eps)}  \left\|\big(T\circ U\big)\big((a_n)\big)\right\| = \frac{2}{\delta} \left\|\big(T\circ U\big)\big((a_n)\big)\right\|.
\end{aligned}
$$
Hence $\|V\|=\left\|(T\circ U)^{-1}\right\| \leq \frac{2}{\delta}$, and we thus obtain $\left(\|U\|\|V\|\right)^{-1} \geq 1\cdot \frac{\delta}{2} = \frac{\delta}{2}$. Consequently, $\fc(T)\geq\frac{\delta}{2}$. This yields the desired inequality $\uc(T)\leq 2\,\fc(T)$.
\end{proof}

%%%%%%%%%%%%%%%%%%%%%%%%%%%%%%%%%%%%%%%%%%%%%%%%%%%%%%%%%%%%%%%%%%%%%%%%%%%%%%%%
%%%%%%%%%%%%%%%%%%%%%%%%%%%%%%%%%%%%%%%%%%%%%%%%%%%%%%%%%%%%%%%%%%%%%%%%%%%%%%%%
\section{Quantitative version of Pe\l czy\' nski's theorem and its generalizations}
%%%%%%%%%%%%%%%%%%%%%%%%%%%%%%%%%%%%%%%%%%%%%%%%%%%%%%%%%%%%%%%%%%%%%%%%%%%%%%%%
%%%%%%%%%%%%%%%%%%%%%%%%%%%%%%%%%%%%%%%%%%%%%%%%%%%%%%%%%%%%%%%%%%%%%%%%%%%%%%%%

A theorem of A. Pe\l czy\' nski from 1962 asserts that the space $C(K)$ of continuous real functions on a compact Hausdorff space $K$ has the property (V) (see \cite[Theorem 1]{pel}). Using a characterization of a quantitative Pe\l czy\' nski's property (V) from the section \ref{charact} we prove a quantitative strengthening of this theorem. The proof is inspired by Pe\l czy\' nski's original proof and it uses some results of \cite{qrdpp}.

\begin{theorem}
\label{q-pel}
Let $\Omega$ be a locally compact space. Then the space $C_0(\Omega)$ enjoys the quantitative property \qos{V} (and hence also \q{V}). More precisely, for every Banach space $Y$ and every operator $T\colon C_0(\Omega)\to Y$
$$\omega(T^*)\leq \pi \uc(T).$$
In the real case (i.e. if $C_0(\Omega)$ are real functions) the constant $\pi$ can be replaced by $2$.
\end{theorem}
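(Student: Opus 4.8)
The plan is to reduce the statement, via the dual characterization already established, to a purely measure-theoretic estimate, and then to manufacture an offending sequence of test functions out of an obstruction to uniform countable additivity. By Proposition~\ref{pel-prop-*-omega} it suffices to prove that for every bounded $K\subseteq C_0(\Omega)^*$ one has $\omega(K)\leq\pi\,\eta(K)$, with $\pi$ replaced by $2$ in the real case. Using the Riesz representation theorem I identify $C_0(\Omega)^*$ with the space $\M(\Omega)$ of regular Borel measures, so that $K$ becomes a bounded set of measures and the inequality to be proved concerns only $\omega$ and $\eta$ evaluated on $K$.

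First I would fix $c<\omega(K)$ and invoke the quantitative form of the Dieudonn\'e--Grothendieck theorem for the de Blasi measure of a bounded set of measures; this is the ingredient borrowed from \cite{qrdpp}. It should provide pairwise \emph{disjoint open} sets $(O_n)$ in $\Omega$ together with measures $(\mu_n)\subseteq K$ such that $|\mu_n|(O_n)>c$ for every $n$. In other words, the failure of uniform countable additivity along a disjoint sequence of open sets is precisely what keeps $\omega(K)=\inf_W\hd(K,W)$ away from zero, and its magnitude is controlled from below by $\omega(K)$ with constant one. I expect this imported characterization to be the main obstacle: the remaining steps are constructions, whereas here one must turn the geometric quantity $\omega(K)$ into a concrete disjoint-open-set obstruction without losing a multiplicative constant. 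Note that openness (rather than mere measurability) is essential, since it is what will allow the test functions below to have disjoint supports.

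Next I would convert each $\mu_n$ into a single test function. Applying the classical inequality between the total variation of a measure and the supremum of its values over subsets, I choose a Borel set $B_n\subseteq O_n$ with $|\mu_n(B_n)|\geq\tfrac1\pi|\mu_n|(O_n)$ in the complex case and $|\mu_n(B_n)|\geq\tfrac12|\mu_n|(O_n)$ in the real case; this is exactly where the constants $\pi$ and $2$ are born. Then, using inner and outer regularity of $\mu_n$ together with Urysohn's lemma, I approximate $\chi_{B_n}$ by a function $f_n\in C_0(\Omega)$ with $\|f_n\|_\infty\leq1$ and compact support contained in $O_n$, arranged so that $|\mu_n(f_n)|>\tfrac1\pi|\mu_n|(O_n)-\eps_n$ (respectively $\tfrac12|\mu_n|(O_n)-\eps_n$) for some $\eps_n\to0$.

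Finally I would verify admissibility and conclude. Since the $O_n$ are pairwise disjoint and $\operatorname{supp}f_n\subseteq O_n$ with $\|f_n\|_\infty\leq1$, for each $t\in\Omega$ at most one term $f_n(t)$ is nonzero, so $\sup_t\sum_n|f_n(t)|\leq1$ and hence $\sup_{x^*\in B_{C_0(\Omega)^*}}\sum_n|x^*(f_n)|\leq1$; thus $(f_n)$ is an admissible sequence in the definition of $\eta(K)$. Consequently
$$\eta(K)\geq\limsup_n\sup_{\mu\in K}|\mu(f_n)|\geq\limsup_n|\mu_n(f_n)|\geq\tfrac1\pi\,c$$
in the complex case, and $\geq\tfrac12 c$ in the real case. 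Letting $c\uparrow\omega(K)$ yields $\omega(K)\leq\pi\,\eta(K)$ (respectively $\omega(K)\leq2\,\eta(K)$), which by Proposition~\ref{pel-prop-*-omega} is precisely the asserted bound $\omega(T^*)\leq\pi\,\uc(T)$ (respectively $\omega(T^*)\leq2\,\uc(T)$).
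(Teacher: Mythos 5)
Your argument follows the paper's proof essentially step for step: reduce to $\omega(K)\le\pi\,\eta(K)$ via Proposition~\ref{pel-prop-*-omega}, extract a disjoint sequence of open sets from the quantitative Dieudonn\'e--Grothendieck estimate of \cite{qrdpp}, build disjointly supported Urysohn functions of norm one, and verify the wuC normalization exactly as you do. The only divergence is bookkeeping: the paper's citation of \cite[Proposition 5.2]{qrdpp} already delivers $|\mu_n(U_n)|>\delta/\pi$ directly (the constant $\pi$, resp.\ $2$, lives inside that proposition), whereas you invoke a constant-one bound on the total variations $|\mu_n|(O_n)$ and only then recover $\pi$ from the variation-versus-values estimate --- so double-check that the stronger total-variation form you import is actually what \cite{qrdpp} provides, since starting instead from the cited form your route would degrade the constant to $\pi^2$.
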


\begin{remark}
It might seem that the quantification with $\omega$ in this theorem is stronger than the quantification through the inequality $\gamma(T^*)\leq C\, \uc(T)$ (which is equivalent to \q{V}), but it is not. In fact, by \cite[Theorem 7.5]{kks} the quantities $\omega$, $\wk_X$, and $\wck_X$ coincide on $\M(\Omega)$. Therefore the properties \q{V} and \qos{V} are equivalent for $C_0(\Omega)$.
\end{remark}

\begin{proof}
Throughout the proof we identify the dual of $C_0(\Omega)$ with the space $\M(\Omega)$ of all finite complex (or signed in the real case) Radon measures on $\Omega$.
By Proposition \ref{pel-prop-*-omega} it suffices to show that for every $K\subset (C_0(\Omega))^*=\M(\Omega)$ bounded $\omega(K)\leq \pi \eta(K)$. Let $K$ be a bounded subset of $\M(\Omega)$. From \cite[Proposition 5.2]{qrdpp} it follows that
$$\frac{1}{\pi}\omega(K)\leq \sup\left\{\limsup_{k\to\infty}\sup_{\mu\in K} |\mu(U_k)|:\ U_k\subset\Omega,\, k\in\N, \text{ pairwise disjoint, open} \right\}$$
(in the real case $\frac{1}{\pi}$ can be replaced by $\frac12$).

Let us fix an arbitrary $\delta<\omega(K)$.
Using the above inequality we find a sequence $(U_n)$ of pairwise disjoint open subsets of $\Omega$ and a sequence $(\mu_n)$ in $K$ such that $|\mu_n(U_n)|>\frac{\delta}{\pi}$.
For each $n\in\N$ we find a continuous function $f_n$ on $\Omega$ with a~compact support such that $\|f_n\|=1$, $f_n=0$ outside $U_n$, and 
\begin{equation}
\label{odhad-munfn}
|\mu_n(f_n)|=\left|\int_\Omega f_n \mathrm{d}\mu_n \right|>\frac{\delta}{\pi}.
\end{equation}
Then for every $\mu\in (C_0(\Omega))^*$ and $n\in\N$
$$\sum_{i=1}^n |\mu(f_i)| \leq \sum_{i=1}^n |\mu|(|f_i|) = |\mu|\left(\sum_{i=1}^n |f_i|\right) \leq |\mu|(1) = \|\mu\|,$$
hence $\sum f_n$ is a wuC series in $C_0(\Omega)$, and $\sup_{\mu\in B_{(C_0(\Omega))^*}} \sum_{i=1}^\infty |\mu(f_i)| \leq 1$.
By (\ref{odhad-munfn}) we have
$$\limsup_n \sup_{\mu\in K} \left|\int f_n \mathrm{d}\mu\right| \geq \limsup_n \left|\int f_n \mathrm{d}\mu_n \right| \geq \frac{\delta}{\pi}.$$
From this we conclude that $\eta(K) \geq \frac{\delta}{\pi}$, and since $\delta<\omega(K)$ has been chosen arbitrarily, $\omega(K)\leq \pi\eta(K)$. In the real case we obtain the similar inequality with $2$ instead of $\pi$.
\end{proof}

Recall that a Banach space $X$ is an \emph{$L^1$~predual}, if the dual space $X^*$ is isometrical to a space $L^1(\Omega,\Sigma,\mu)$ for some measure space $(\Omega,\Sigma,\mu)$. In 1973 Johnsson and Zippin proved that every real $L^1$~predual has the property (V) (see \cite[Corrollary (i)]{jz}). We prove a quantitative version of this theorem using results of their paper and the quantitative version of Pe\l czy\' nski's theorem.

\begin{theorem}
\label{q-pel-pred}
Let $X$ be a real $L^1$~predual. Then $X$ has the quantitative properties \q{V} and \qos{V}.
\end{theorem}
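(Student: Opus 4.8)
The plan is to reduce the assertion to the quantitative Pe\l czy\' nski theorem for $C(K)$-spaces (Theorem~\ref{q-pel}) by exploiting the structure of real $L^1$-preduals: by \cite{jz} every real $L^1$-predual $X$ is isometrically a quotient of a $C(K)$-space $C(K_0)$. Let $Q\colon C(K_0)\to X$ be the associated quotient map, so that $Q^*\colon X^*\to C(K_0)^*$ is an isometric embedding. I would first establish the property \q{V} in the equivalent form $\gamma(T^*)\le C\cdot\uc(T)$ coming from Proposition~\ref{pel-prop-*}, and then upgrade $\gamma$ to $\omega$ on the dual (which is an $L^1$-space) to reach \qos{V}.

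The first step is to note that precomposition with $Q$ does not increase $\uc$: for every operator $T\colon X\to Y$ one has $\uc(TQ)\le\uc(T)$. Indeed, if $(c_n)\subset C(K_0)$ satisfies $\sup_{c^*\in B_{C(K_0)^*}}\sum_n|c^*(c_n)|\le1$, then $x_n=Qc_n$ obeys $\sup_{x^*\in B_{X^*}}\sum_n|x^*(x_n)|\le1$ because $Q^*(B_{X^*})\subset B_{C(K_0)^*}$, while $\sum_{i\le n}TQc_i=\sum_{i\le n}Tx_i$; taking the supremum of $\ca$ over all such $(c_n)$ gives the inequality. The second, and key, step is the transfer of the measure of weak non-compactness of the adjoints. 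Writing $A=T^*(B_{Y^*})\subset X^*$ we have $(TQ)^*(B_{Y^*})=Q^*A$, and I claim $\gamma(T^*)=\gamma(A)$ equals $\gamma(Q^*A)=\gamma\big((TQ)^*\big)$, the $\gamma$'s being computed in $X^*$ and $C(K_0)^*$ respectively. The inequality ``$\ge$'' is immediate since $(Q^*)^*$ maps $B_{C(K_0)^{**}}$ into $B_{X^{**}}$; the reverse inequality is the crucial point and, remarkably, needs no complementation: given a double-limit configuration for $\gamma$ of $A$ with functionals $(\xi_m)\subset B_{X^{**}}$, Hahn--Banach extends each $\xi_m$ along the isometric embedding $Q^*$ to some $\Xi_m\in B_{C(K_0)^{**}}$ with $\Xi_m(Q^*a)=\xi_m(a)$, reproducing the same iterated-limit difference against $Q^*A$.

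With this identity available, Theorem~\ref{q-pel} applied to $C(K_0)$, combined with (\ref{equiv}) and (\ref{non-equiv}) to pass from $\omega$ to $\gamma$, yields $\gamma\big((TQ)^*\big)\le C\cdot\uc(TQ)$, and the two previous steps give $\gamma(T^*)\le C\cdot\uc(T)$. By Propositions~\ref{pel-prop-non*} and \ref{pel-prop-*} this estimate is equivalent to the property \q{V}. To obtain the stronger \qos{V}, i.e. $\omega(T^*)\le C\cdot\uc(T)$, I would upgrade the $\gamma$-estimate to an $\omega$-estimate on $X^*$. Since $X$ is an $L^1$-predual, $X^*$ is isometric to an $L^1$-space, and on such spaces the de Blasi measure $\omega$ is dominated by the equivalent measures $\wk$, $\wck$, $\gamma$ -- the analogue for $L^1$ of the coincidence on $\M(\Omega)$ recorded after Theorem~\ref{q-pel} (see \cite{kks}). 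Hence $\omega(T^*)\le C'\gamma(T^*)\le CC'\uc(T)$. Alternatively, one can transfer $\omega$ directly at the operator level: $Q^*$ identifies $X^*$ with an $L$-summand of $C(K_0)^*=\M(K_0)$, so a norm-one band projection lets one pull a de Blasi approximant of $Q^*A$ back to $X^*$, giving $\omega(T^*)\le\omega\big((TQ)^*\big)\le 2\,\uc(T)$ straight from Theorem~\ref{q-pel} in the real case.

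The main obstacle is precisely this last point about the de Blasi measure. Unlike $\gamma$, the quantity $\omega$ is extrinsic and, as stressed after (\ref{non-equiv}), is not equivalent to $\gamma$ for a general Banach space; its transfer therefore genuinely rests on the special structure of $X^*$ as an $L^1$-space -- either through the coincidence of all measures of weak non-compactness on $L^1$, or through the $L$-summand decomposition of $\M(K_0)$. The remaining bookkeeping -- verifying that the quotient representation of \cite{jz} is available for every real $L^1$-predual and tracking the resulting absolute constant -- is routine once this is settled.
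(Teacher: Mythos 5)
Your overall strategy --- push the problem to a $C(K)$-space along a quotient map, apply Theorem \ref{q-pel} there, and transfer both $\uc$ and the measure of weak non-compactness back --- is the same as the paper's, and two of your ingredients are sound: the inequality $\uc(TQ)\le\uc(T)$ is exactly how the paper transfers wuC series, and your Hahn--Banach identity $\gamma(T^*)=\gamma\big((TQ)^*\big)$ is a correct (and arguably cleaner) substitute for the paper's primal-side computation $\gamma(T\circ q)=\gamma(T(B_Z))$ followed by the quantitative Gantmacher theorem. The final upgrade from $\gamma(T^*)$ to $\omega(T^*)$ via the coincidence of the measures of weak non-compactness on $L^1$-spaces is also what the paper does. (Your ``alternative'' via an $L$-summand is not available, though: the kernel of a quotient map $C(K_0)\to X$ need not be an $M$-ideal, so $Q^*(X^*)$ need not be an $L$-summand of $\M(K_0)$; if it were, $X$ would itself be a $C(E)$-space.)

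The genuine gap is your very first step. What \cite{jz} proves is that every \emph{separable} real $L^1$-predual is a quotient of $C(\Delta)$; it does not provide a quotient representation of an arbitrary real $L^1$-predual by a $C(K)$-space, and circumventing this is the main content of the paper's proof. The paper fixes $\delta<\gamma(T)$, picks a sequence $(x_n)\subset B_X$ with $\gamma(\{Tx_n\})>\delta$, and uses \cite[\S~23, Lemma 1]{lacey} to place $\overline{\span}\{x_n\}$ inside a separable $L^1$-predual $Z\subset X$, to which Johnson--Zippin then applies. This separable reduction is not ``routine bookkeeping'': it is precisely why the paper works with $\gamma(T)$ on the primal side (witnessed by sequences in $B_X$, hence localizable to $Z$) and only passes to $\gamma(T^*)$ by Gantmacher at the end. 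Your dual-side identity does not localize in the same way: replacing $X$ by a separable $Z\subset X$ replaces $T^*$ by $(T\restriction_Z)^*=R\circ T^*$, where $R\colon X^*\to Z^*$ is the restriction map, and there is no a priori lower bound for $\gamma\big((T\restriction_Z)^*\big)$ in terms of $\gamma(T^*)$ for a $Z$ chosen in advance. To repair the argument you must insert the Lacey/Johnson--Zippin separable reduction and reroute the transfer of the weak non-compactness measure through the primal side --- which brings you back to essentially the paper's proof.
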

\begin{proof}
Let $Y$ be a Banach space and $T\colon X\to Y$ be an operator. We prove that $\gamma(T)\leq 4 \uc(T)$, that is, $X$ enjoys \q{V}. From this is follows that $\gamma(T^*)\leq 8 \uc(T)$ by the quantitative version of the Gantmacher theorem (\ref{q-gantmacher}). But the quantities $\gamma$ and $\omega$ are equivalent on $X^*$ -- by \cite[Theorem 7.5]{kks} and (\ref{equiv}) we obtain $\omega(T^*)\leq 16 \uc(T)$, which means that $X$ has \qos{V}.

Let us fix $\delta<\gamma(T)=\gamma\big(T(B_X)\big)$. By the definition of $\gamma$ we can find a sequence $(x_n)$ in $B_X$ for which $\gamma\big(\{Tx_n:\,n\in\N\}\big)>\delta$. The space $\overline\span\{x_n:\,n\in\N\}$ is a~closed separable subspace of the $L^1$~predual $X$, hence by \cite[\S~23, Lemma 1]{lacey} we can find a separable $L^1$~predual $Z$ such that $\overline\span\{x_n:\,n\in\N\}\subset Z \subset X$.

By \cite{jz}, $Z$ is a quotient of $C(\Delta)$, where $\Delta=\{0,1\}^{\N}$ is the Cantor space. Let $q\colon C(\Delta) \to Z$ be a quotient map, i.e. $q\left(U_{C(\Delta)}\right) = U_Z$. Then $T\circ q\colon C(\Delta)\to Y$ is a bounded linear operator, and
$$
\begin{aligned}
2\omega((T\circ q)^*) &\stackrel{(\ref{equiv}),(\ref{non-equiv})}{\geq} \gamma((T\circ q)^*) \stackrel{(\ref{q-gantmacher})}{\geq} \gamma(T\circ q) = \gamma\left(T\circ q\left(B_{C(\Delta)}\right)\right) = \gamma\left(T\left(q\left(U_{C(\Delta)}\right)\right)\right) \\
&\hskip 2.7 mm = \gamma\left(T\left(U_Z\right)\right) = \gamma\left(T\left(B_Z\right)\right) \geq \gamma\left(\{Tx_n:\,n\in\N\}\right) > \delta.
\end{aligned}$$
\begin{sloppypar}Since $\Delta$ is compact, Theorem \ref{q-pel} gives $\omega((T\circ q)^*)\leq 2 \uc(T\circ q)$, and we thus get $\uc(T\circ q) > \frac{\delta }{4}$. Hence we can find a wuC series $\sum f_n$ in $C(\Delta)$ with $\sup_{\mu\in B_{(C(\Delta))^*}} \sum |\mu(f_n)| \leq 1$ such that $\ca\left(\left(\sum_{i=1}^n T(q f_i)\right)_n \right) > \frac{\delta}{4}$.\end{sloppypar}

We set $z_n=q(f_n)$, $n\in\N$. Then $\sum z_n$ is a wuC series in $Z\subset X$ with
$$\sup_{x^*\in B_{X^*}} \sum_{n=1}^\infty |x^*(z_n)| = \sup_{x^*\in B_{X^*}} \sum_{n=1}^\infty |(x^*\circ q)(f_n)| \leq \sup_{\mu\in B_{(C(\Delta))^*}} \sum |\mu(f_n)| \leq 1.$$
Furthermore, $\ca\left(\left(\sum_{i=1}^n Tz_i\right)_n \right) > \frac{\delta}{4}$. Hence $\uc(T)>\frac{\delta}{4}$. This inequality holds for every $\delta<\gamma(T)$, therefore $\gamma(T)\leq 4\uc(T)$.
\end{proof}

\begin{proposition}
\label{C(Omega)}\
\begin{enumerate}[(i)]
\item If $\Omega$ is a scattered locally compact space, then $C_0(\Omega)$ has the properties \q{V}, \qos{V}, and also \qo{V}.
\item If $\Omega$ is an uncountable separable metrizable locally compact space, then $C_0(\Omega)$ has the properties \q{V} and \qos{V}, but it does not enjoy the property \qo{V}.
\end{enumerate}
\end{proposition}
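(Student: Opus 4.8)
Both statements assert the same pair of positive conclusions, and these come for free. A locally compact space is in particular locally compact Hausdorff, so Theorem~\ref{q-pel} applies to $C_0(\Omega)$ and gives $\omega(T^*)\le\pi\,\uc(T)$ for every operator $T$ out of $C_0(\Omega)$; this is \qos{V}, which entails \q{V} (see the remark following Proposition~\ref{pel-prop-*-omega}). Thus in both (i) and (ii) the whole content lies in the property \qo{V}, and the plan is to exploit the following dichotomy in the dual. The space $\Omega$ is scattered exactly when every Radon measure on it is purely atomic, i.e.\ when $\M(\Omega)=(C_0(\Omega))^*$ is isometric to $\ell^1(\Omega)$; and an uncountable separable metrizable space is never scattered, so in (ii) the dual genuinely contains non-atomic measures. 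Since $\ell^1(\Omega)$ has the Schur property, weakly compact and norm compact subsets of it coincide, whence the de Blasi measure and the Hausdorff measure agree on the dual: $\omega(K)=\chi(K)$ for every bounded $K\subset(C_0(\Omega))^*$ in the scattered case.

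For (i) I would combine this with the quantitative Schauder theorem. Fix $T\colon C_0(\Omega)\to Y$. Its adjoint sends $B_{Y^*}$ into $\M(\Omega)=\ell^1(\Omega)$, so $\chi(T^*)=\omega(T^*)$ by the previous paragraph. Quantitative Schauder gives $\chi(T)\le2\,\chi(T^*)$: one first verifies $\chi(S^*)\le2\,\chi(S)$ for an arbitrary operator $S$, applies it to $S=T^*$, and uses $\chi(T)\le\chi(T^{**})$, which holds because $T(B_{C_0(\Omega)})\subset T^{**}(B_{(C_0(\Omega))^{**}})$. Together with the trivial estimate $\omega(T)\le\chi(T)$ and with $\omega(T^*)\le\pi\,\uc(T)$ from Theorem~\ref{q-pel} this yields
$$\omega(T)\le\chi(T)\le2\,\chi(T^*)=2\,\omega(T^*)\le2\pi\,\uc(T),$$
so $C_0(\Omega)$ has \qo{V} (with constant $4$ in the real case). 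This settles (i).

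For the failure in (ii) the first move is to trade $\uc$ for $\gamma$ by means of a \emph{universal} inequality $\uc(T)\le2\,\gamma(T)$. Indeed, Theorem~\ref{fixc0-quant} gives $\tfrac12\uc(T)\le\fc(T)$, and $\fc(T)\le\gamma(T)$ holds for every operator: if $T\!\restriction_{X_0}$ is an isomorphism with $(T\!\restriction_{X_0})^{-1}=U\circ V$ as in the definition of $\fc$, then the partial sums $b_n=\sum_{i=1}^n TUe_i=V^{-1}\big(\sum_{i=1}^n e_i\big)$ satisfy $b_n/\|U\|\in T(B_{C_0(\Omega)})$, and evaluating them on Hahn--Banach extensions of $V^*p_m/\|V\|$ yields $\gamma(T)\ge(\|U\|\|V\|)^{-1}$; taking the supremum over all such data gives $\gamma(T)\ge\fc(T)$. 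Consequently, were $C_0(\Omega)$ to have \qo{V} with constant $C$, we would get $\omega(T)\le C\,\uc(T)\le2C\,\gamma(T)$ for \emph{all} operators out of $C_0(\Omega)$, and it suffices to refute this uniform comparison of $\omega$ and $\gamma$. Here I would reduce to the Cantor space: an uncountable separable metrizable locally compact $\Omega$ contains a copy of $\Delta=\{0,1\}^\N$ as a compact subset, and the restriction $q\colon C_0(\Omega)\to C(\Delta)$ is a metric surjection with $q(B_{C_0(\Omega)})=B_{C(\Delta)}$ (extend by Tietze with compact support); hence $T=S\circ q$ satisfies $\omega(T)=\omega(S)$ and $\gamma(T)=\gamma(S)$, and it is enough to produce operators $S\colon C(\Delta)\to Y$ with $\gamma(S)$ bounded while $\omega(S)\to\infty$.

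The existence of such operators is exactly the non-equivalence of the de Blasi measure and $\gamma$ (equivalently $\wk$), realized on the domain $C(\Delta)$. Note that the requirement is intrinsic: by (\ref{q-gantmacher}) and by \cite[Theorem 7.5]{kks} (which makes $\omega$, $\wk$ and $\wck$ coincide on $\M(\Delta)$) one has $\gamma(S)\approx\gamma(S^*)\approx\omega(S^*)$, so what is needed is a sequence $S_N\colon C(\Delta)\to Y$ with $\omega(S_N^*)$ bounded but $\omega(S_N)\to\infty$ -- precisely a failure of a \emph{de Blasi} Gantmacher theorem, cf.\ \cite[Theorem 4]{at} and the non-equivalence example \cite[Corollary 3.4]{ac}. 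The hard part will be to exhibit these operators \emph{with domain $C(\Delta)$}: the witnessing sets $A_N$ with $\gamma(A_N)$ bounded and $\omega(A_N)\to\infty$ must be realized as images $S_N(B_{C(\Delta)})$, and since every infinite-dimensional quotient of $C(\Delta)$ contains $c_0$ one cannot simply factor through an arbitrary space. Instead I expect to build the $S_N$ explicitly from the dyadic (Rademacher) structure of $\Delta$, controlling $\omega(S_N^*)$ on the measure side while letting the de Blasi measure of the image accumulate across disjoint blocks. Granting this construction, the operators $T_N=S_N\circ q$ have $\omega(T_N)\to\infty$ and $\uc(T_N)\le2\,\gamma(T_N)$ bounded, so no constant $C$ can fulfil $\omega(T_N)\le C\,\uc(T_N)$, and $C_0(\Omega)$ fails \qo{V}.
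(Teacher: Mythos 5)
Your treatment of the positive assertions and of part (i) is sound. For (i) you replace the paper's citation of \cite[Theorem 8.2]{kks} (which gives $\omega(T)\le 2\omega(T^*)$ when the dual is isometric to $\ell^1(\Omega)$) by the observation that $\ell^1(\Omega)$ has the Schur property, so $\omega=\chi$ on its bounded subsets, combined with the quantitative Schauder theorem $\chi(T)\le 2\chi(T^*)$; this yields the same estimate $\omega(T)\le 2\pi\,\uc(T)$ and is an essentially equivalent route. Your universal inequality $\uc(T)\le 2\,\fc(T)\le 2\gamma(T)$, obtained by testing the partial sums $\sum_{i\le n}TUe_i$ against Hahn--Banach extensions of $V^*p_m/\|V\|$ in the double-limit criterion, is also correct and is a clean self-contained reduction.

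The genuine gap is in the negative half of (ii). After reducing the problem to producing operators $S_N\colon C(\Delta)\to Y$ with $\gamma(S_N)$ bounded and $\omega(S_N)\to\infty$, you explicitly write ``Granting this construction'' --- but that construction \emph{is} the entire content of the claim, and nothing in your sketch (``build the $S_N$ explicitly from the dyadic structure of $\Delta$\dots'') supplies it. The known non-equivalence of $\omega$ and $\gamma$ from \cite[Corollary 3.4]{ac} is witnessed by sets in a specific space that there is no reason to realize as images of $B_{C(\Delta)}$, and, as you note yourself, one cannot simply factor through an arbitrary space since every infinite-dimensional quotient of $C(\Delta)$ contains $c_0$. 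The paper closes this gap differently: it invokes \cite[Example 3.2]{qrdpp}, which already exhibits operators $T_n$ on $C_0(\Omega)$ for exactly this class of $\Omega$ with $\cc(T_n)/\omega(T_n)\to 0$, and then uses the chain $\uc(T)\le 4\Rcc(T)\le 4\cc_\rho(T)\le 4\cc(T)$ from Theorem \ref{pavouk-quant} (equivalently, the implication \qo{V} $\Rightarrow$ \qo{RDP} of Theorem \ref{pavouk-p-quant}) to conclude that \qo{V} fails. Unless you either carry out your dyadic construction in full or import such a ready-made example, part (ii) remains unproved.
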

\begin{proof}
Let $\Omega$ be a scattered locally compact space. The space $C_0(\Omega)$ has the properties \q{V} and \qos{V} by Theorem \ref{q-pel}. Let $Y$ be a Banach space and $T\colon C_0(\Omega) \to Y$ an operator. Since for $\Omega$ scattered $C_0(\Omega)^*$ is isometric to $\ell^1(\Omega)$, we have $\omega(T) \leq 2 \omega(T^*)$ by \cite[Theorem 8.2]{kks}. Combining it with Theorem \ref{q-pel} we obtain $\omega(T) \leq 2\pi\uc(T)$, that is, $C_0(\Omega)$ has the property \qo{V}.

The second statement is proved in Section \ref{properties}.
\end{proof}

The proposition above shows that \qo{V} differs from the other two quantifications, but we do not know whether there is any difference between the properties \q{V} and \qos{V}.

\begin{question}
Is there a Banach space which has the property \q{V} but not the property \qos{V}?
\end{question}

There is one even more interesting open question whether the Pelczynski's property (V) is automatically quantitative or not.

\begin{question}
Is there a Banach space which has Pe\l czy\' nski's property (V) but not the quantitative version \q{V}?
\end{question}

%%%%%%%%%%%%%%%%%%%%%%%%%%%%%%%%%%%%%%%%%%%%%%%%%%%%%%%%%%%%%%%%%%%%%%%%%%%%%%%%
%%%%%%%%%%%%%%%%%%%%%%%%%%%%%%%%%%%%%%%%%%%%%%%%%%%%%%%%%%%%%%%%%%%%%%%%%%%%%%%%
\section{Some other properties of Banach spaces, their quantification and relationship to the property (V)}
\label{other}
%%%%%%%%%%%%%%%%%%%%%%%%%%%%%%%%%%%%%%%%%%%%%%%%%%%%%%%%%%%%%%%%%%%%%%%%%%%%%%%%
%%%%%%%%%%%%%%%%%%%%%%%%%%%%%%%%%%%%%%%%%%%%%%%%%%%%%%%%%%%%%%%%%%%%%%%%%%%%%%%%

In this section we remind the definitions of some known properties of operators between Banach spaces, relationships between them, and their relation to unconditionally converging operators. These relationships are then quantified. The introduced properties of operators give rise to some properties of Banach spaces which are related to Pe\l czy\' nski's property (V). These properties can be quantified in the same way as the property (V). Using the proved quantitative relationships between different kinds of operators we establish the relation between quantitative versions of relevant properties of Banach spaces, including the property (V). Finally, we apply these results and those of \cite{kks} to some $C_0(\Omega)$ spaces.

%%%%%%%%%%%%%%%%%%%%%%%%%%%%%%%%%%%%%%%%%%%%%%%%%%%%%%%%%%%%%%%%%%%%%%%%%%%%%%%%
\subsection{Some properties of operators, their relation to unconditionally converging operators, and their quantification}
%%%%%%%%%%%%%%%%%%%%%%%%%%%%%%%%%%%%%%%%%%%%%%%%%%%%%%%%%%%%%%%%%%%%%%%%%%%%%%%%

Let $X$ be a Banach space. We will denote by $\rho$ the topology of uniform convergence on weakly compact subsets of $X^*$. This topology is called the \emph{Right topology}\/ and it is the restriction to $X$ of the Mackey topology $\mu(X^{**},X^*)$ on $X^{**}$ with respect to the dual pair $(X^{**},X^*)$. An operator from $X$ into a Banach space $Y$ is weakly compact if and only if it is Right-to-norm continuous (see \cite{pvwy}).

We say that an operator between Banach spaces is
\begin{itemize}
\item \emph{completely continuous (cc)} if it is weak-to-norm sequentially continuous,
\item \emph{pseudo weakly compact (pwc)} if it is Right-to-norm sequentially continuous,
\item \emph{weakly completely continuous (wcc)} if it maps weakly Cauchy sequences to weakly convergent sequences,
\item \emph{Right completely continuous (Rcc)} if it maps Right-Cauchy sequences to Right-convergent sequences.
\end{itemize}
M. Ka\v cena has proved in \cite[\S~3]{kacena} (using also \cite{pvwy}) that for every operator $T$ between Banach spaces the following implications hold:
\begin{equation}
\label{pavouk}
\begin{array}{ccccccccc}
&& \hskip -2 mm T \text{ is $w$-compact} \hskip -2 mm & \Longrightarrow & \hskip -8 mm T \text{ is pwc} \hskip -5 mm &&&& \cr
T \text{ is compact} \hskip -2 mm & \text{\rotatebox[origin=l]{45}{$\hskip 0.5 mm \Longrightarrow$}} \hskip -5,6 mm \text{\rotatebox[origin=l]{-45}{$\hskip 0.5 mm\Longrightarrow$}} && \text{\rotatebox[origin=c]{45}{$\Longrightarrow$}} \hskip -5,2 mm \text{\rotatebox[origin=c]{-45}{$\Longrightarrow$}}&&\text{\rotatebox[origin=r]{45}{$\hskip -4 mm \Longrightarrow$}} \hskip -2,4 mm \text{\rotatebox[origin=r]{-45}{$\hskip -4 mm \Longrightarrow$}}&\hskip -2 mm T \text{ is Rcc} \hskip -2 mm & \Longrightarrow & \hskip -2 mm T \text{ is uc} \cr
&&T \text{ is cc} & \Longrightarrow & \hskip -2 mm T \text{ is wcc\quad} \hskip -2 mm  &&&& \cr
\end{array}
\end{equation}
Some of these implications have already been quantified in \cite[\S~3,4]{kks}. In this section we quantify the rest.

Let $X$, $Y$ be Banach spaces and $T\colon X\to Y$ an operator. We set
$$
\begin{aligned}
\cc(T) &= \sup\big\{\ca\big((Tx_n)\big):\ (x_n) \text{ is a weakly Cauchy sequence in } B_X\big\}, \\
\cc_\rho(T) &= \sup\big\{\ca\big((Tx_n)\big):\ (x_n) \text{ is a Right-Cauchy sequence in } B_X\big\}. \\
\end{aligned}
$$
The former quantity measures how far is $T$ from being completely continuous, the latter one measures how far is $T$ from being pseudo weakly compact.

As for the other properties mentioned above, let us first remind that a bidual space $X^{**}$ is complete with respect to both the weak$^*$ and the Mackey topology and that the weak$^*$ topology is coarser than the Mackey topology. Therefore every weakly Cauchy sequence in a Banach space $X$ is weak$^*$-convergent in $X^{**}$, every Right-Cauchy sequence in a Banach space $X$ is $\mu(X^{**},X^*)$-convergent and hence also weak$^*$-convergent in $X^{**}$. Each bounded linear operator, which is by definition norm-to-norm continuous, is also weak-to-weak continuous and Right-to-Right continuous (see \cite[Lemma 12]{pvwy}). Let $X$, $Y$ be Banach spaces and $T\colon X\to Y$ an operator. Let us set
$$\begin{aligned}
\wcc(T) &= \sup\big\{\dist(w^*\text{-}\lim(Tx_n), Y):~(x_n) \text{ is a } w\text{-Cauchy sequence in } B_X\big\} \\
        &= \sup\big\{\wk_Y\big(\{Tx_n:\,n\in\N\}\big):~(x_n) \text{ is a } w\text{-Cauchy sequence in } B_X\big\}, \\
\wcc_\omega(T) &= \sup\big\{\omega\big(\{Tx_n:\,n\in\N\}\big):~(x_n) \text{ is a } w\text{-Cauchy sequence in } B_X\big\}, \\
\Rcc(T) &= \sup\big\{\dist(\mu(Y^{**}\hskip -1 mm,Y^*)\text{-}\lim(Tx_n), Y):\ (x_n) \text{ is a } \rho\text{-Cauchy sequence in } B_X\big\} \\
        &= \sup\big\{\dist(w^*\text{-}\lim(Tx_n), Y):~(x_n) \text{ is a } \rho\text{-Cauchy sequence in } B_X\big\} \\
				&= \sup\big\{\wk_Y\big(\{Tx_n:\,n\in\N\}\big):~(x_n) \text{ is a } \rho\text{-Cauchy sequence in } B_X\big\}, \\
\Rcc_\omega(T) &= \sup\big\{\omega\big(\{Tx_n:\,n\in\N\}\big):~(x_n) \text{ is a } \rho\text{-Cauchy sequence in } B_X\big\}. \\
\end{aligned}
$$
The first two quantities measure (in two different ways) weak non-complete continuity of $T$, the last two are measures of Right non-complete continuity of $T$.

The following theorem contains quantitative versions of all the implications in (\ref{pavouk}).

\begin{theorem}
\label{pavouk-quant}
Let $X$, $Y$ be Banach spaces and $T\colon X\to Y$ an operator. Then
$$
\begin{array}{ccccclc}
&& 2\omega(T^*) &&&& \cr
&& \text{\rotatebox[origin=c]{90}{$\leq$}} &&&& \cr
&& \cc_\rho(T) & \leq & \cc(T) & \leq 4\cdot \hskip -3 mm & \chi(T) \cr
&& \text{\rotatebox[origin=c]{90}{$\leq$}} && \text{\rotatebox[origin=c]{90}{$\leq$}} && \text{\rotatebox[origin=c]{90}{$\leq$}} \cr
&&\Rcc_\omega(T) & \leq & \wcc_\omega(T) & \leq & \omega (T) \cr
&& \text{\rotatebox[origin=c]{90}{$\leq$}} && \text{\rotatebox[origin=c]{90}{$\leq$}} && \text{\rotatebox[origin=c]{90}{$\leq$}} \cr
\frac14 \uc(T) & \leq & \Rcc(T) & \leq & \wcc(T) & \leq & \wk_Y(T). \cr
\end{array}
$$
\end{theorem}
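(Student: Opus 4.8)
The plan is to verify the fourteen individual inequalities encoded in the diagram, grouping them by difficulty. Most of the vertical estimates and several of the horizontal ones are immediate from the relations among the set measures recalled in Section 2 together with two elementary observations. First, the Right topology is finer than the weak topology, so every Right-Cauchy sequence is weakly Cauchy; hence each supremum taken over $\rho$-Cauchy sequences is dominated by the corresponding supremum over $w$-Cauchy sequences. Second, for any sequence one has $\chi(\{y_n:n\in\N\})\le\ca((y_n))$ (estimate the tail $\{y_n:n\ge N\}$ by the single point $y_N$ and note that $\chi$ ignores finitely many points). Concretely, $\cc_\rho(T)\le\cc(T)$, $\Rcc_\omega(T)\le\wcc_\omega(T)$ and $\Rcc(T)\le\wcc(T)$ follow from the inclusion of the two families; $\Rcc(T)\le\Rcc_\omega(T)$ and $\wcc(T)\le\wcc_\omega(T)$ from $\wk_Y\le\omega$ in (\ref{non-equiv}); $\Rcc_\omega(T)\le\cc_\rho(T)$ and $\wcc_\omega(T)\le\cc(T)$ from $\omega\le\chi\le\ca$ applied to $(Tx_n)$; $\wcc_\omega(T)\le\omega(T)$ and $\wcc(T)\le\wk_Y(T)$ from monotonicity of $\omega$ and $\wk_Y$ together with $\{Tx_n\}\subset T(B_X)$; and the rightmost column $\wk_Y(T)\le\omega(T)\le\chi(T)$ is just (\ref{non-equiv}) and $\omega\le\chi$ applied to $T(B_X)$. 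This leaves three substantive inequalities: $\cc(T)\le4\chi(T)$, $\cc_\rho(T)\le2\omega(T^*)$ and $\tfrac14\uc(T)\le\Rcc(T)$.

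For $\cc(T)\le4\chi(T)$ I would fix a $w$-Cauchy sequence $(x_n)$ in $B_X$; then $(Tx_n)$ is $w$-Cauchy and $\chi(\{Tx_n\})\le\chi(T)$. Given $\eps>0$, cover $\{Tx_n\}$ by finitely many balls of radius $\chi(T)+\eps$ with centres $f_1,\dots,f_m$ and, for each $n$, pick a centre $f_{j(n)}$ with $\|Tx_n-f_{j(n)}\|<\chi(T)+\eps$. If two centres $f_j,f_{j'}$ are used for infinitely many indices, then testing against $y^*\in B_{Y^*}$ and using that $(y^*(Tx_n))$ converges forces $\|f_j-f_{j'}\|\le2(\chi(T)+\eps)$; discarding the finitely many indices whose centre is used only finitely often, any two tail indices $k,l$ give $\|Tx_k-Tx_l\|\le(\chi(T)+\eps)+2(\chi(T)+\eps)+(\chi(T)+\eps)$, so $\ca((Tx_n))\le4(\chi(T)+\eps)$, and $\eps\to0$ finishes it. For $\cc_\rho(T)\le2\omega(T^*)$ I would fix a $\rho$-Cauchy $(x_n)$ in $B_X$ and write $\|Tx_k-Tx_l\|=\sup_{x^*\in T^*(B_{Y^*})}|x^*(x_k-x_l)|$. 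Given $\eps>0$ choose a weakly compact $K\subset X^*$ with $\hd(T^*(B_{Y^*}),K)\le\omega(T^*)+\eps$; since $\|x_k-x_l\|\le2$ and $(x_n)$ is $\rho$-Cauchy, the contribution coming from $K$ tends to $0$ while the approximation error costs at most $2(\omega(T^*)+\eps)$, so $\ca((Tx_n))\le2\omega(T^*)$ after $\eps\to0$.

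The heart of the theorem is $\tfrac14\uc(T)\le\Rcc(T)$. I would first record that the partial sums $s_N=\sum_{i\le N}x_i$ of any series with $\sup_{x^*\in B_{X^*}}\sum|x^*(x_i)|\le1$ form a Right-Cauchy sequence in $B_X$: the map $S\colon X^*\to\ell^1$, $Sx^*=(x^*(x_i))_i$, has norm at most $1$, so it sends a weakly compact $W\subset X^*$ to a weakly compact, hence (Schur) norm-compact, subset of $\ell^1$, and the resulting uniform smallness of tails gives $\sup_{x^*\in W}|x^*(s_N-s_M)|\to0$. Rather than work with arbitrary partial sums, I would prove the cleaner inequality $\fc(T)\le\Rcc(T)$ and then invoke Theorem \ref{fixc0-quant}, which yields $\tfrac12\uc(T)\le\fc(T)$. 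So fix a copy of $c_0$ witnessing a value close to $\fc(T)$: a subspace $X_0\subset X$ with $T\restriction_{X_0}$ an isomorphism and $(T\restriction_{X_0})^{-1}=U\circ V$ for isomorphisms $U\colon c_0\to X_0$, $V\colon T(X_0)\to c_0$. Put $z_m=\|U\|^{-1}Ue_m$; as in the proof of Theorem \ref{fixc0-quant}, $\sum z_m$ is a normalised wuC series, so its partial sums $\Sigma_M$ lie in $B_X$ and are Right-Cauchy by the previous observation. Let $\Theta\in Y^{**}$ be the $w^*$-limit of $(T\Sigma_M)$; since that sequence is $w^*$-convergent with values in $Y$, $\Rcc(T)\ge\wk_Y(\{T\Sigma_M:M\in\N\})=\dist(\Theta,Y)$.

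The main obstacle is then to bound $\dist(\Theta,Y)$ from below by $(\|U\|\|V\|)^{-1}$. Here $Z:=T(X_0)=\overline{\span}\{Tz_m:m\in\N\}$ is a copy of $c_0$ inside $Y$, and $\Theta$ lies in its weak$^*$ closure $Z^{\perp\perp}=Z^{**}$. With the onto isomorphism $R\colon c_0\to Z$ determined by $Re_m=Tz_m$ (so $R^{-1}=\|U\|V$ and $\|R^{-1}\|=\|U\|\|V\|$), weak$^*$ continuity of $R^{**}$ and $\sum_{m\le M}e_m\to\mathbf1$ give $\Theta=R^{**}\mathbf1$, whence $\dist_{Z^{**}}(\Theta,Z)=\inf_{w\in c_0}\|R^{**}(\mathbf1-w)\|\ge\|R^{-1}\|^{-1}\dist(\mathbf1,c_0)=(\|U\|\|V\|)^{-1}$. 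The delicate point is that $Y$ is far larger than $Z$, so one must pass from $\dist(\Theta,Z)$ to $\dist(\Theta,Y)$: I would use the canonical isometric embedding $Z^{**}/Z\hookrightarrow Y^{**}/Y$, valid for any closed subspace because $Z^{\perp\perp}\cap Y=Z$, which yields $\dist(\Theta,Y)=\dist_{Z^{**}}(\Theta,Z)\ge(\|U\|\|V\|)^{-1}$. Taking the supremum over all witnessing copies gives $\fc(T)\le\Rcc(T)$, and therefore $\tfrac14\uc(T)\le\tfrac12\uc(T)\le\fc(T)\le\Rcc(T)$. I expect precisely this last distance estimate — controlling $\dist(\Theta,Y)$ rather than the readily computed $\dist(\Theta,Z)$ — to be the only genuinely delicate step; the constant in the statement is met with room to spare, the argument in fact delivering the sharper factor $\tfrac12$.
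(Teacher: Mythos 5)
Your overall strategy for the one substantive inequality, $\tfrac14\uc(T)\le\Rcc(T)$, is the same as the paper's: reduce to a bound of the form $\fc(T)\le C\,\Rcc(T)$, invoke Theorem \ref{fixc0-quant}, and test $\Rcc$ against the partial sums of the normalised $c_0$-basis pushed into $X$ through $U$. Your direct proofs of the inequalities the paper simply imports from \cite{kks} (the $4\chi(T)$ bound on $\cc(T)$ and the $2\omega(T^*)$ bound on $\cc_\rho(T)$) are correct, and your Schur-property argument that the partial sums of a normalised wuC series are Right-Cauchy is a clean self-contained alternative to the paper's appeal to the Dunford--Pettis property of $c_0$.

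There is, however, one genuine error, and it sits exactly at the step you single out as delicate. The map $Z^{**}/Z\to Y^{**}/Y$ induced by a closed subspace $Z\subset Y$ is injective (that is all that $Z^{\perp\perp}\cap Y=Z$ gives you) and norm-nonincreasing, but it is \emph{not} isometric in general: for $\Theta\in Z^{\perp\perp}$ one only gets $\dist(\Theta,Y)\le\dist(\Theta,Z)$ for free, and the reverse inequality can fail. A concrete counterexample is $Z=c_0\subset Y=c$ with $\Theta=(0,1,0,1,\dots)\in\ell^\infty= c_0^{\perp\perp}$: here $\dist(\Theta,c_0)=1$, while $\dist(\Theta,c)=\tfrac12$, witnessed by $y=\tfrac12\mathbf{1}\in c$ (compute in $c^{**}\cong\ell^\infty\oplus_\infty\er$, where $y\in c$ embeds as $(y,\lim y)$ and $\Theta$ as $(\theta,0)$). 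The correct general statement is $\dist(\Theta,Z)\le 2\dist(\Theta,Y)$ --- push an approximant $y\in Y$ through the quotient map $Y\to Y/Z$, whose bidual annihilates $\Theta$, to find $z\in Z$ with $\|y-z\|$ small --- and this is precisely what the paper cites as \cite[Lemma 2.2]{gs}; the example above shows the factor $2$ is sharp. So your claimed $\fc(T)\le\Rcc(T)$ and the ``sharper factor $\tfrac12$'' are not established. Replacing the isometry by the factor-$2$ estimate, your argument yields $\fc(T)\le 2\Rcc(T)$ and hence $\tfrac14\uc(T)\le\Rcc(T)$, which is the constant in the statement and is exactly why the theorem has $\tfrac14$ rather than $\tfrac12$. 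With that single repair the proof is complete.
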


\begin{proof}
All the inequalities
$$
\begin{array}{ccc}
4\chi(T) && \cr
\text{\rotatebox[origin=c]{90}{$\leq$}} && \cr
\cc(T) & \leq & \chi(T)\cr
\text{\rotatebox[origin=c]{90}{$\leq$}} && \text{\rotatebox[origin=c]{90}{$\leq$}} \cr
\wcc_\omega(T) & \leq & \omega (T) \cr
\text{\rotatebox[origin=c]{90}{$\leq$}} && \text{\rotatebox[origin=c]{90}{$\leq$}} \cr
\wcc(T) & \leq & \wk_Y(T) \cr
\end{array}
$$
has already been proved (or simply observed) in \cite[\S~3]{kks}. The inequality $\cc_\rho(T) \leq 2 \omega(T^*)$ follows from \cite[(2.1) and (4.1)]{kks}.

The inequalities $\cc_\rho(T) \leq \cc(T)$, $\Rcc(T) \leq \wcc(T)$, and $\Rcc_\omega(T) \leq \wcc_\omega(T)$ are trivial, since every Right-Cauchy sequence is weakly Cauchy. By (\ref{non-equiv}), $\wk_Y(A)\leq \omega(A)$ for every bounded $A\subset Y$. Therefore $\Rcc(T)\leq \Rcc_\omega(T)$ (as well as $\wcc(T)\leq \wcc_\omega(T)$, which has already been noted).

Let us show that $\Rcc_\omega(T) \leq \cc_\rho(T)$. Suppose that $\cc_\rho(T)<\delta$. Let $(x_n)$ be a~Right-Cauchy sequence in $B_X$. Then $\ca\big((Tx_n)\big)<\delta$, hence we can find $n_0\in\N$ such that $\|Tx_n-Tx_{n_0}\|<\delta$ whenever $n>n_0$. Set $K=\{Tx_1,\dots,Tx_{n_0}\}$. Then $K$ is weakly compact, and $\hd(\{Tx_n:\,n\in\N\},K) \leq \delta$. Therefore $\omega(\{Tx_n:\,n\in\N\}) \leq \delta$. We thus get $\Rcc_\omega(T) \leq \delta$, and consequently $\Rcc_\omega(T) \leq \cc_\rho(T)$.

Finally, we prove the inequality $\uc(T) \leq 4\Rcc(T)$. By Theorem \ref{fixc0-quant}, it is enough to show that $\fc(T) \leq 2\Rcc(T)$. If $\fc(T)=0$, then it is obvious. Suppose that $\fc(T)>0$ and fix $0<\delta<\fc(T)$. By the definition of $\fc(T)$ we find a subspace $X_0$ of $X$ isomorphic to $c_0$ and onto isomorphisms $U\colon c_0\to X_0$, $V\colon T(X_0)\to c_0$ such that $(T\restriction_{X_0})^{-1}=U\circ V$, and $\big(\|U\| \|V\|\big)^{-1} > \delta$.

Set $f_n=\sum_{i=1}^n e_n \in c_0$, $n\in\N$. Then $(f_n)$ is a weakly Cauchy sequence in $c_0$. Since the space $c_0$ enjoys the Dunford-Pettis property (see e.g. \cite[p. 597]{bst}), the weak and the Right topology coincide sequentially on it by \cite[Proposition 3.17]{kacena}. Therefore the sequence $(f_n)$ is Right-Cauchy. Let us define $x_n=\frac{1}{\|U\|}Uf_n$, $n\in\N$. By the continuity of $U$, $(x_n)$ is a Right-Cauchy sequence in $B_X$. Since $T$ is bounded, we also have that $(Tx_n)$ is a Right-Cauchy sequence in $Y$. Let $y^{**}$ be its $\mu(Y^{**},Y^*)$-limit in $Y^{**}$. We will show that $\dist(y^{**},Y)>\frac{\delta}{2}$.

Let us set $Y_0 = \overline{\span}\{Tx_n:\,n\in\N\} = \overline{\span}\{(T\circ U)f_n:\,n\in\N\} = \overline{\span}\{(T\circ U)e_n:\,n\in\N\} = (T\circ U)(c_0)$. If $T\circ U$ is regarded as an isomorphism from $c_0$ onto $Y_0$, then $(T\circ U)^{**}$ is an isomorphism from $c_0^{**}$ onto $Y_0^{**}$, and $\|((T\circ U)^{**})^{-1}\| = \|(T\circ U)^{-1}\| = \|V\|$.
Let $y_0\in Y_0$ be arbitrary. We find $z\in c_0$ which satisfies $\tfrac{1}{\|U\|}(T\circ U)z=y_0$. Then
$$
\begin{aligned}
\|y^{**}-y_0\| &= \left\|\mu(Y^{**},Y^*)\text{-}\lim\Big(\tfrac{1}{\|U\|}(T\circ U)f_n\Big) - \tfrac{1}{\|U\|}(T\circ U)z\right\| \\
&= \frac{1}{\|U\|} \left\|(T\circ U)^{**}\big((\mu(Y^{**},Y^*)\text{-}\lim f_n) - z \big)\right\| \\
&\geq\frac{1}{\|U\|} \left\|\big((T\circ U)^{**}\big)^{-1}\right\|^{-1} \left\|(\mu(Y^{**},Y^*)\text{-}\lim f_n) - z\right\| \\
&= \|U\|^{-1} \|V\|^{-1} \left\|(w^*\text{-}\lim f_n) - z\right\| \geq (\|U\| \|V\|)^{-1},
\end{aligned}
$$
where the last inequality follows from the fact, that $w^*\text{-}\lim f_n = (1,1,1,\dots)\in \ell^\infty\cong c_0^{**}$ whereas $z\in c_0$, so the distance between these two elements is at least $\lim_{n\to\infty}|1-z(n)| = 1$. Therefore $\dist(y^{**},Y_0)\geq (\|U\| \|V\|)^{-1} > \delta$. By \cite[Lemma 2.2]{gs}, $\dist(y^{**},Y_0) \leq 2\dist(y^{**},Y)$, and hence $\dist(y^{**},Y) > \frac{\delta}{2}$.

We thus have $\Rcc(T) > \frac{\delta}{2}$. It follows that $\fc(T)\leq 2 \Rcc(T)$, which completes the proof.
\end{proof}

\begin{remark}
The inequality $\cc_\rho(T) \leq 2 \omega(T^*)$ from the above theorem quantifies the implication
$$T \text{ is weakly compact } \Rightarrow T \text{ is pseudo weakly compact}$$
due to the Gantmacher theorem. We cannot obtain a better quantification either with $\gamma(T)$ or with $\omega(T)$ instead of $\omega(T^*)$. The space $X$ constructed in \cite[Example 10.1(v)]{kks} forms a counterexample. Since this space enjoys the Dunford-Pettis property, the weak and the Right topology coincide sequentially on $X$ (see \cite[Proposition 3.17]{kacena}), thus $\cc(T) = \cc_\rho(T)$ for each operator $T\colon X\to Y$ ($Y$ a Banach space). But there are operators $T_n\colon X\to c_0$, $n\in\N$, such that $\cc(T_n)\geq 1$ for each $n\in\N$ and $\omega(T_n)=\wk_{c_0}(T_n)\to 0$. The measures $\wk_{c_0}$ and $\gamma$ are equivalent by (\ref{equiv}), hence there is no constant $C>0$ such that $\cc_\rho(T)=\cc(T) \leq C \gamma(T)$ or $\cc_\rho(T)=\cc(T) \leq C \omega(T)$ for each operator $T\colon X\to c_0$. 
\end{remark}

%%%%%%%%%%%%%%%%%%%%%%%%%%%%%%%%%%%%%%%%%%%%%%%%%%%%%%%%%%%%%%%%%%%%%%%%%%%%%%%%
\subsection{Properties of Banach spaces related to above-defined properties of operators and a relationship between their quantitative versions}
\label{properties}
%%%%%%%%%%%%%%%%%%%%%%%%%%%%%%%%%%%%%%%%%%%%%%%%%%%%%%%%%%%%%%%%%%%%%%%%%%%%%%%%

Let us recall some properties of Banach spaces, whose definitions use the above-introduced properties of operators. We follow the the notation of \cite{kacena}.
Let $X$ be a Banach space. We say that
\begin{itemize}
\item $X$ has the \emph{reciprocal Dunford-Pettis property (RDP)}\/ if for every Banach space $Y$ every cc operator $T\colon X\to Y$ is weakly compact,
\item $X$ has the \emph{Dieudonn\'e property (D)}\/ if for every Banach space $Y$ every wcc operator $T\colon X\to Y$ is weakly compact,
\item $X$ has the \emph{Right Dieudonn\'e property (RD)}\/ if for every Banach space $Y$ every Rcc operator $T\colon X\to Y$ is weakly compact,
\item $X$ is \emph{sequentially Right (SR)}\/ if for every Banach space $Y$ every pwc operator $T\colon X\to Y$ is weakly compact.
\end{itemize}

The following implications are an immediate consequence of (\ref{pavouk}):
\begin{equation}
\label{pavouk-p}
\begin{array}{ccccccc}
&&&& X \text{ is (SR)} && \cr
X \text{ has (V)} & \Longrightarrow & X \text{ has (RD)} & \text{\rotatebox[origin=l]{45}{$\hskip 0.5 mm \Longrightarrow$}} \hskip -5,6 mm \text{\rotatebox[origin=l]{-45}{$\hskip 0.5 mm\Longrightarrow$}}&&\text{\rotatebox[origin=r]{45}{$\hskip -4 mm \Longrightarrow$}} \hskip -2,4 mm \text{\rotatebox[origin=r]{-45}{$\hskip -4 mm \Longrightarrow$}}& X \text{ has (RDP)} \cr
&&&& X \text{ has (D)} && \cr
\end{array}
\end{equation}
All these properties have their quantitative versions, obtained in a standard way. First we define quantitative versions of the properties (SR) and (RDP) analogous to \q{V}, \qo{V}, \qos{V}.
\begin{definition}
We say that a Banach space $X$ has the property \q{RDP}, \qo{RDP}, or \qos{RDP} if there is a constant $C>0$ such that for every Banach space $Y$ and every operator $T\colon X\to Y$
$$\gamma(T)\leq C \cc(T), \qquad \omega(T)\leq C \cc(T), \qquad \text{or} \qquad \omega(T^*) \leq C \cc(T),$$
respectively.
Analogously we define the properties \q{SR}, \qo{SR}, and \qos{SR} -- we just replace $\cc$ in the above inequalities by $\cc_\rho$.
\end{definition}
For the details about a quantification of the reciprocal Dunford-Pettis property we refer the reader to \cite{qrdpp}.
Regarding the properties (D) and (RD), there are even more possibilities of quantification. Besides the measures of weak non-compactness of $T$, we can also choose between two different quantities which measure weak non-complete continuity and Right non-complete continuity of $T$.
\begin{definition}
We say that a Banach space $X$ has the property \q{D}, \qo{D}, \qos{D}, \qO{D}, \qoO{D}, or \qosO{D} if there is a constant $C>0$ such that for every Banach space $Y$ and every operator $T\colon X\to Y$
$$
\def\arraystretch{1.5}
\begin{array}{lll}
\gamma(T)\leq C \wcc(T), \hskip 1 cm &\omega(T)\leq C \wcc(T),\hskip 1 cm &\omega(T^*) \leq C \wcc(T), \\
\gamma(T)\leq C \wcc_\omega(T),  &\omega(T)\leq C \wcc_\omega(T),\quad \text{or} &\omega(T^*) \leq C \wcc_\omega(T).
\end{array}
$$
The properties \q{RD}, \qo{RD}, \qos{RD}, \qO{RD}, \qoO{RD}, and \qosO{RD} are defined in the same way, the quantities $\wcc$ and $\wcc_\omega$ are replaced by $\Rcc$ and $\Rcc_\omega$, respectively.
\end{definition}

Clearly, if $X$ has the property \qo{P} or \qos{P}, then it also has the property \q{P} by (\ref{equiv}), (\ref{non-equiv}), and (\ref{q-gantmacher}). Here P stands for V, RD, D, SR, or RDP. From Theorem \ref{pavouk-quant} we obtain the following relationships between the quantitative versions of the properties defined above.

\begin{theorem}
\label{pavouk-p-quant}
For a Banach space $X$ the following implications hold:
\[
\begin{array}{ccccccc}
X \text{ has \q{V}} & \Longrightarrow & X \text{ has \q{RD}} & \Longrightarrow & X \text{ has \qO{RD}} & \Longrightarrow & X \text{ has \q{SR}} \cr
&& \text{\rotatebox[origin=c]{-90}{$\Longrightarrow$}} && \text{\rotatebox[origin=c]{-90}{$\Longrightarrow$}} && \text{\rotatebox[origin=c]{-90}{$\Longrightarrow$}} \cr
&& X \text{ has \q{D}} & \Longrightarrow & X \text{ has \qO{D}} & \Longrightarrow & X \text{ has \q{RDP}}, \cr
\end{array}
\]
\medskip
\[
\begin{array}{ccccccc}
X \text{ has \qo{V}} & \Longrightarrow & X \text{ has \qo{RD}} & \Longrightarrow & X \text{ has \qoO{RD}} & \Longrightarrow & X \text{ has \qo{SR}} \cr
&& \text{\rotatebox[origin=c]{-90}{$\Longrightarrow$}} && \text{\rotatebox[origin=c]{-90}{$\Longrightarrow$}} && \text{\rotatebox[origin=c]{-90}{$\Longrightarrow$}} \cr
&& X \text{ has \qo{D}} & \Longrightarrow & X \text{ has \qoO{D}} & \Longrightarrow & X \text{ has \qo{RDP}}, \cr
\end{array}
\]
\medskip
\[
\begin{array}{ccccccc}
X \text{ has \qos{V}} & \Longrightarrow & X \text{ has \qos{RD}} & \Longrightarrow & X \text{ has \qosO{RD}} & \Longrightarrow & X \text{ has \qos{SR}} \cr
&& \text{\rotatebox[origin=c]{-90}{$\Longrightarrow$}} && \text{\rotatebox[origin=c]{-90}{$\Longrightarrow$}} && \text{\rotatebox[origin=c]{-90}{$\Longrightarrow$}} \cr
&& X \text{ has \qos{D}} & \Longrightarrow & X \text{ has \qosO{D}} & \Longrightarrow & X \text{ has \qos{RDP}}. \cr
\end{array}
\]
\end{theorem}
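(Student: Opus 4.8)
The plan is to derive Theorem~\ref{pavouk-p-quant} as a direct formal consequence of the chain of inequalities established in Theorem~\ref{pavouk-quant}, together with the relationships \eqref{equiv}, \eqref{non-equiv}, and the quantitative Gantmacher theorem \eqref{q-gantmacher}. The key observation is that each property ``$X$ has \q{P}'' asserts the existence of a constant $C>0$ such that a chosen measure of weak non-compactness of $T$ is bounded by $C$ times a chosen measure of $T$ failing the relevant continuity property, uniformly over all $Y$ and all $T\colon X\to Y$. Since the two measures involved in any two adjacent properties in the diagram are pointwise comparable by Theorem~\ref{pavouk-quant}, the implications propagate constant-by-constant.

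First I would treat the first diagram (the $\gamma$ row, corresponding to the unstarred properties \q{V}, \q{RD}, etc.). The horizontal arrows follow by composing the inequalities between the ``non-continuity'' measures: for instance, \q{V} says $\gamma(T)\leq C\,\uc(T)$, and since $\uc(T)\leq 4\Rcc(T)$ by Theorem~\ref{pavouk-quant}, we immediately get $\gamma(T)\leq 4C\,\Rcc(T)$, which is \q{RD}. Moving rightward, $\Rcc(T)\leq\Rcc_\omega(T)\leq\wcc_\omega(T)$ and so on, yielding \qO{RD} and then \q{SR} via $\Rcc_\omega(T)\leq\cc_\rho(T)$. The vertical arrows likewise come from comparing the non-continuity measures in the different rows of Theorem~\ref{pavouk-quant}: $\Rcc(T)\leq\wcc(T)$ gives \q{RD}$\Rightarrow$\q{D}, then $\wcc(T)\leq\wcc_\omega(T)$ and $\cc_\rho(T)\leq\cc(T)$ handle the remaining vertical steps, landing at \q{RDP}. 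In every case the weak-non-compactness measure on the left-hand side ($\gamma$) is held fixed, so only the right-hand constant changes.

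Next I would observe that the second and third diagrams are obtained by the identical bookkeeping, simply replacing $\gamma(T)$ throughout by $\omega(T)$ for the $\omega$-row and by $\omega(T^*)$ for the $\omega^*$-row. The only point requiring care is that the relevant comparison inequalities among the non-continuity measures in Theorem~\ref{pavouk-quant} do not involve the weak-non-compactness measure on the left, so they apply verbatim regardless of whether we measure weak non-compactness of $T$ through $\gamma$, $\omega$, or through $T^*$ via $\omega(T^*)$. For the starred diagram one additionally uses $2\omega(T^*)\geq\cc_\rho(T)$ from Theorem~\ref{pavouk-quant} to reach the bottom-right corner, and the chain $\uc(T)\leq 4\Rcc(T)$ for the leftmost horizontal arrow.

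I do not expect a genuine obstacle here: the theorem is essentially a formal transcription of Theorem~\ref{pavouk-quant} into the language of the defined properties of Banach spaces, and the proof amounts to reading off the pointwise inequalities and tracking constants. The only mild subtlety worth stating explicitly is that each implication preserves the \emph{qualifier} on the weak-non-compactness measure (unstarred, $\omega$, or $\omega^*$) while only degrading the multiplicative constant, so that a single constant valid uniformly over all $Y$ and $T$ indeed persists along each arrow; this is immediate from the uniform (pointwise-in-$T$) nature of the inequalities in Theorem~\ref{pavouk-quant}.
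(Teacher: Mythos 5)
Your proposal is correct and takes essentially the same route as the paper, which states this theorem as an immediate consequence of Theorem~\ref{pavouk-quant}: each arrow follows by composing the pointwise inequalities among the non-continuity measures ($\uc(T)\leq 4\Rcc(T)\leq 4\Rcc_\omega(T)\leq 4\cc_\rho(T)\leq 4\cc(T)$, $\Rcc\leq\wcc\leq\wcc_\omega\leq\cc$) while the chosen measure of weak non-compactness ($\gamma(T)$, $\omega(T)$, or $\omega(T^*)$) is held fixed on the left. One minor correction: the inequality $\cc_\rho(T)\leq 2\omega(T^*)$ is not needed (and points the wrong way) for reaching \qos{RDP}; that corner is reached, exactly as in the other two diagrams, via $\cc_\rho(T)\leq\cc(T)$ and $\wcc_\omega(T)\leq\cc(T)$.
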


Now we are ready to prove Proposition \ref{C(Omega)}\,(ii).

\begin{proof}[Proof of Proposition \ref{C(Omega)}\,(ii)]
If we take the space $Y$ constructed in \cite[Example 3.2]{qrdpp}, then there is a sequence $(T_n)$ of operators from $C_0(\Omega)$ to $Y$ which satisfies
$$\lim_{n\to\infty} \frac{\cc(T_n)}{\omega(T_n)}=0.$$
Therefore $C_0(\Omega)$ does not have the property \qo{RDP}, hence not even the property \qo{V} by Theorem \ref{pavouk-p-quant}. But it follows from Theorem \ref{q-pel} that $C_0(\Omega)$ enjoys the property \q{V}.
\end{proof}

%%%%%%%%%%%%%%%%%%%%%%%%%%%%%%%%%%%%%%%%%%%%%%%%%%%%%%%%%%%%%%%%%%%%%%%%%%%%%%%%
\subsection{Some corollaries for $C_0(\Omega)$ spaces}
\label{corollaries}
%%%%%%%%%%%%%%%%%%%%%%%%%%%%%%%%%%%%%%%%%%%%%%%%%%%%%%%%%%%%%%%%%%%%%%%%%%%%%%%%

\begin{corollary}
\label{equiv-C0}
Let $\Omega$ be a locally compact space, $Y$ be a Banach space, and $T\colon C_0(\Omega) \to Y$ an operator. Then
$$
\begin{array}{ccccccccccccc}
&& \wcc(T) && \wcc_\omega(T) && \cc(T) &&&& \cr
&& \text{\rotatebox[origin=c]{90}{$=$}} && \text{\rotatebox[origin=c]{90}{$=$}} && \text{\rotatebox[origin=c]{90}{$=$}} &&&& \cr
\frac14 \uc(T) & \leq & \Rcc(T) & \leq & \Rcc_\omega(T) & \leq & \cc_\rho(T) & \leq  & 2\omega(T^*) & \leq & 2\pi \uc(T) \cr
&& \text{\rotatebox[origin=c]{-90}{$\leq$}} &&&&&& \text{\rotatebox[origin=c]{-90}{$=$}} && \cr
&& \wk_Y(T) & \leq & \gamma(T) & \leq & \gamma(T^*) & \leq & 2 \wk_Y(T^*) && \cr
&& \text{\rotatebox[origin=c]{-90}{$\leq$}} &&&&&&&& \cr
&& \omega(T) &&&&&&&& \cr
&& \text{\rotatebox[origin=c]{-90}{$\leq$}} &&&&&&&& \cr
&& \chi(T). &&&&&&&& \cr
\end{array}
$$
In particular, all the quantities except for $\omega(T)$ and $\chi(T)$ are equivalent.
\end{corollary}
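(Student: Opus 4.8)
The plan is to assemble the diagram from results already available, the hypothesis that the domain is $C_0(\Omega)$ entering only to close the chain of inequalities into a loop and to upgrade three pairs of one-sided estimates into genuine equalities. First I would lay down the ``spine''. Applying Theorem~\ref{pavouk-quant} to an arbitrary operator $T\colon C_0(\Omega)\to Y$ gives at once
$$\tfrac14\uc(T)\le\Rcc(T)\le\Rcc_\omega(T)\le\cc_\rho(T)\le 2\omega(T^*),$$
as well as the vertical estimate $\Rcc(T)\le\wcc(T)\le\wk_Y(T)$, hence $\Rcc(T)\le\wk_Y(T)$. The one new ingredient that closes the spine is Theorem~\ref{q-pel}, which for $C_0(\Omega)$ yields $\omega(T^*)\le\pi\,\uc(T)$ and therefore $2\omega(T^*)\le 2\pi\,\uc(T)$; the top row is then a single chain from $\tfrac14\uc(T)$ up to $2\pi\,\uc(T)$.

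Next I would establish the three vertical equalities of the top block. The key fact is that $C_0(\Omega)$ enjoys the Dunford--Pettis property, so by \cite[Proposition~3.17]{kacena} the weak and the Right topologies coincide sequentially on it; in particular a sequence in $B_{C_0(\Omega)}$ is weakly Cauchy if and only if it is Right-Cauchy. Since $\cc$ and $\cc_\rho$ (and likewise $\wcc$ and $\Rcc$, and $\wcc_\omega$ and $\Rcc_\omega$) are given by the very same expression, evaluated over these two families of test sequences, the defining suprema run over one and the same set, so $\cc(T)=\cc_\rho(T)$, $\wcc(T)=\Rcc(T)$, and $\wcc_\omega(T)=\Rcc_\omega(T)$. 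For the last equality I would use that $(C_0(\Omega))^*=\M(\Omega)$, on which the measures $\omega$ and $\wk$ coincide by \cite[Theorem~7.5]{kks}; applied to the bounded set $T^*(B_{Y^*})\subset\M(\Omega)$ this gives $\omega(T^*)=\wk(T^*)$.

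Finally I would fill in the lower two rows from the general, domain-free equivalences: $\wk_Y(T)\le\gamma(T)$ and $\gamma(T^*)\le 2\wk(T^*)$ are instances of (\ref{equiv}), $\gamma(T)\le\gamma(T^*)$ is the quantitative Gantmacher estimate (\ref{q-gantmacher}), and $\wk_Y(T)\le\omega(T)\le\chi(T)$ combines (\ref{non-equiv}) with the trivial bound $\omega\le\chi$. The ``in particular'' clause is then a diagram chase: every displayed quantity other than $\omega(T)$ and $\chi(T)$ is trapped between $\tfrac14\uc(T)$ and $2\pi\,\uc(T)$---for the bottom row one threads $\tfrac14\uc(T)\le\Rcc(T)\le\wk_Y(T)\le\gamma(T)\le\gamma(T^*)\le 2\wk(T^*)=2\omega(T^*)\le 2\pi\,\uc(T)$---so all of them are equivalent to $\uc(T)$ and hence mutually equivalent. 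I expect the only genuine subtlety to be the three equalities: one must note that ``weak $=$ Right sequentially'' forces the two families of Cauchy sequences, and thus the associated suprema, to coincide exactly (not merely up to a constant), the failure of which for $\omega(T)$ and $\chi(T)$ is precisely what excludes them from the equivalence.
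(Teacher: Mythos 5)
Your proposal is correct and follows essentially the same route as the paper's proof: the spine from Theorem \ref{pavouk-quant} closed by Theorem \ref{q-pel}, the three equalities from the Dunford--Pettis property via \cite[Proposition 3.17]{kacena}, the equality $\omega(T^*)=\wk(T^*)$ from \cite[Theorem 7.5]{kks}, and the remaining rows from (\ref{equiv}), (\ref{non-equiv}), and (\ref{q-gantmacher}). Nothing is missing.
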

\begin{proof}
Since $C_0(\Omega)$ has the Dunford-Pettis property, the weak and the Right topology coincide sequentially on $X$ by \cite[Proposition 3.17]{kacena}. That is why $\Rcc(T) = \wcc(T)$, $\Rcc_\omega(T) = \wcc_\omega(T)$, and $\cc_\rho(T) = \cc(T)$. The equality $\omega(T^*) = \wk_Y(T^*)$ follows from \cite[Theorem 7.5]{kks}. By (\ref{equiv}) and (\ref{q-gantmacher}) we have $\wk_Y(T) \leq \gamma(T) \leq \gamma(T^*) \leq 2 \wk_Y(T^*)$. Theorem \ref{q-pel} gives $\omega(T^*) \leq \pi \uc(T)$. The rest follows from Theorem \ref{pavouk-quant}.
\end{proof}

\begin{remark}
Almost the same assertion holds for every operator $T\colon X\to Y$ if $X$ is a real $L^1$ predual and $Y$ a Banach space. We only need to adjust the constant in the inequality $\omega(T^*) \leq \pi \uc(T)$. From the proof of Theorem \ref{q-pel-pred} we see that it is enough to replace $\pi$ by $16$. All the quantities except for $\omega(T)$ and $\chi(T)$ are still equivalent.
\end{remark}

\begin{corollary}
\label{equiv-scattered}
Let $\Omega$ be a scattered locally compact space, $Y$ be a Banach space, and $T\colon C_0(\Omega) \to Y$ an operator. Then
$$
\begin{array}{ccccccccccccc}
&& \wcc(T) && \wcc_\omega(T) && \cc(T) &&&& \cr
&& \text{\rotatebox[origin=c]{90}{$=$}} && \text{\rotatebox[origin=c]{90}{$=$}} && \text{\rotatebox[origin=c]{90}{$=$}} &&&& \cr
\frac14 \uc(T) & \leq & \Rcc(T) & \leq & \Rcc_\omega(T) & \leq & \cc_\rho(T) & \leq  & 2\omega(T^*) & \leq & 2\pi \uc(T) \cr
&& \text{\rotatebox[origin=c]{-90}{$\leq$}} &&&&\text{\rotatebox[origin=c]{90}{$\leq$}} &&&& \cr
&& \wk_Y(T) & \leq & \omega(T) & \leq & \chi(T) &&&& \cr
\end{array}
$$
Hence all the involved quantities are equivalent.
\end{corollary}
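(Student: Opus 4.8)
The plan is to read almost every displayed relation off Theorem~\ref{pavouk-quant} and Theorem~\ref{q-pel}, just as in Corollary~\ref{equiv-C0}, and then to add a single scattered-specific ingredient that pulls $\omega(T)$ and $\chi(T)$ into the common equivalence class. First I would record that $C_0(\Omega)$ has the Dunford--Pettis property, so the weak and the Right topology coincide sequentially on it by \cite[Proposition~3.17]{kacena}; this gives the three vertical equalities $\wcc(T)=\Rcc(T)$, $\wcc_\omega(T)=\Rcc_\omega(T)$, and $\cc(T)=\cc_\rho(T)$. Feeding them into Theorem~\ref{pavouk-quant} produces the whole middle row
$$\tfrac14\uc(T)\leq\Rcc(T)\leq\Rcc_\omega(T)\leq\cc_\rho(T)\leq 2\omega(T^*),$$
together with the two downward inequalities $\Rcc(T)\leq\wk_Y(T)$ and $\cc_\rho(T)\leq\chi(T)$ (the latter using the observation $\cc(T)\leq\chi(T)$ from \cite[\S~3]{kks}). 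Theorem~\ref{q-pel} supplies $\omega(T^*)\leq\pi\uc(T)$, a scattered space being locally compact, which closes the top row at $2\pi\uc(T)$, while the bottom row $\wk_Y(T)\leq\omega(T)\leq\chi(T)$ is merely (\ref{non-equiv}) and the trivial $\omega\leq\chi$.

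At this stage every quantity except $\omega(T)$ and $\chi(T)$ is already sandwiched between $\tfrac14\uc(T)$ and $2\pi\uc(T)$, so the only genuine work is to bound these two from above by a multiple of $\uc(T)$. Here I would use that $\Omega$ scattered forces $C_0(\Omega)^*$ to be isometric to $\ell^1(\Omega)$, a space with the Schur property. As in the proof of Proposition~\ref{C(Omega)}\,(i), \cite[Theorem~8.2]{kks} then gives $\omega(T)\leq 2\omega(T^*)\leq 2\pi\uc(T)$. For $\chi(T)$ the key observation is that on the Schur space $\ell^1(\Omega)$ the weakly compact and the norm-compact subsets coincide, so the infima defining $\omega$ and $\chi$ run over exactly the same family and hence $\chi(T^*)=\omega(T^*)$; combining this with the quantitative Schauder estimate $\chi(T)\leq 2\chi(T^*)$ (the $\chi$-version of \cite[Theorem~8.2]{kks} applied to the $\ell^1$-valued adjoint) yields $\chi(T)\leq 2\omega(T^*)\leq 2\pi\uc(T)$.

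Finally I would collect the estimates: the lower chain $\chi(T)\geq\omega(T)\geq\wk_Y(T)\geq\Rcc(T)\geq\tfrac14\uc(T)$ shows that $\omega(T)$ and $\chi(T)$ dominate $\uc(T)$ up to the factor $4$, while the previous paragraph bounds them above by $2\pi\uc(T)$, so all eleven quantities become mutually equivalent. The step I expect to be the main obstacle is precisely the upper bound on $\chi(T)$: for an arbitrary range space $Y$ there is in general no way to pass from weak non-compactness to genuine non-compactness, and it is only the Schur property of $\ell^1(\Omega)$ — which identifies $\chi(T^*)$ with $\omega(T^*)$ — together with the quantitative form of Schauder's theorem that makes the transition through the adjoint $T^*$ legitimate. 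Everything else is a bookkeeping assembly of Theorem~\ref{pavouk-quant}, the Dunford--Pettis equalities, and Theorem~\ref{q-pel}.
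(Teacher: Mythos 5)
Your proposal reaches the stated equivalence and is essentially sound, but it handles the scattered-specific part by a different route than the paper. The paper's proof is a one-liner: everything except the bottom row is already Corollary~\ref{equiv-C0}, and the fact that $C_0(\Omega)^*$ is isometric to $\ell^1(\Omega)$ lets it invoke \cite[Theorem 8.2]{kks} directly to attach $\omega(T)$ and $\chi(T)$ to the equivalence class. You instead split this last step in two: $\omega(T)\leq 2\omega(T^*)$ via \cite[Theorem 8.2]{kks} (same as the paper's use in Proposition~\ref{C(Omega)}\,(i)), and for $\chi(T)$ the observation that the Schur property of $\ell^1(\Omega)$ makes weakly compact and norm compact sets coincide, so $\chi(T^*)=\omega(T^*)$, followed by the quantitative Schauder inequality $\chi(T)\leq 2\chi(T^*)$. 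That argument is correct, and it is arguably more self-contained since it isolates exactly where scatteredness enters; but note two points. First, the inequality $\chi(T)\leq 2\chi(T^*)$ is a classical fact (Goldenstein--Markus type duality for the Hausdorff measure of non-compactness), not a ``$\chi$-version of \cite[Theorem 8.2]{kks}''; you need to cite it as such. Second, reading the rotated symbols consistently with the rest of the paper, the displayed array asserts $\chi(T)\leq\cc_\rho(T)$ (this upper bound is what forces the equivalence), not $\cc_\rho(T)\leq\chi(T)$ as in your first paragraph; your route only recovers the weaker bound $\chi(T)\leq 2\omega(T^*)$. That still yields the final sentence ``all the involved quantities are equivalent,'' with a slightly worse constant, but it does not literally reproduce the arrow in the diagram, which in the paper comes packaged inside \cite[Theorem 8.2]{kks}.
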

\begin{proof}
The assertion follows from Corollary \ref{equiv-C0} and \cite[Theorem 8.2]{kks} since $C_0(\Omega)^*$ for $\Omega$ scattered is isometric to $\ell^1(\Omega)$.
\end{proof}

%%%%%%%%%%%%%%%%%%%%%%%%%%%%%%%%%%%%%%%%%%%%%%%%%%%%%%%%%%%%%%%%%%%%%%%%%%%%%%%%
%%%%%%%%%%%%%%%%%%%%%%%%%%%%%%%%%%%%%%%%%%%%%%%%%%%%%%%%%%%%%%%%%%%%%%%%%%%%%%%%
\section*{Acknowledgement} The author wishes to thank her supervisor Ond\v rej Kalenda for many helpful conversations, suggestions, and comments.
%%%%%%%%%%%%%%%%%%%%%%%%%%%%%%%%%%%%%%%%%%%%%%%%%%%%%%%%%%%%%%%%%%%%%%%%%%%%%%%%
%%%%%%%%%%%%%%%%%%%%%%%%%%%%%%%%%%%%%%%%%%%%%%%%%%%%%%%%%%%%%%%%%%%%%%%%%%%%%%%%

\bibliography{qpelczynski}

\begin{thebibliography}{10}

\bibitem{kalton}
Fernando Albiac and Nigel~J. Kalton.
\newblock {\em Topics in {B}anach space theory}, volume 233 of {\em Graduate
  Texts in Mathematics}.
\newblock Springer, New York, 2006.

\bibitem{ac}
C.~Angosto and B.~Cascales.
\newblock Measures of weak noncompactness in {B}anach spaces.
\newblock {\em Topology Appl.}, 156(7):1412--1421, 2009.

\bibitem{at}
Kari Astala and Hans-Olav Tylli.
\newblock Seminorms related to weak compactness and to {T}auberian operators.
\newblock {\em Math. Proc. Cambridge Philos. Soc.}, 107(2):367--375, 1990.

\bibitem{qgroth}
Hana Bendov{\'a}.
\newblock Quantitative {G}rothendieck property.
\newblock {\em J. Math. Anal. Appl.}, 412(2):1097--1104, 2014.

\bibitem{qbs}
Hana Bendov{\'a}, Ond{\v{r}}ej F.~K. Kalenda, and Ji{\v{r}}{\'{\i}} Spurn{\'y}.
\newblock Quantification of the {B}anach-{S}aks property.
\newblock {\em J. Funct. Anal.}, 268(7):1733--1754, 2015.

\bibitem{cmr}
B.~Cascales, W.~Marciszewski, and M.~Raja.
\newblock Distance to spaces of continuous functions.
\newblock {\em Topology Appl.}, 153(13):2303--2319, 2006.

\bibitem{cks}
Bernardo Cascales, Ond{\v{r}}ej F.~K. Kalenda, and Ji{\v{r}}{\'{\i}}
  Spurn{\'y}.
\newblock A quantitative version of {J}ames's compactness theorem.
\newblock {\em Proc. Edinb. Math. Soc. (2)}, 55(2):369--386, 2012.

\bibitem{deblasi}
Francesco~S. De~Blasi.
\newblock On a property of the unit sphere in a {B}anach space.
\newblock {\em Bull. Math. Soc. Sci. Math. R. S. Roumanie (N.S.)},
  21(69)(3-4):259--262, 1977.

\bibitem{diestel}
Joseph Diestel.
\newblock {\em Sequences and series in {B}anach spaces}, volume~92 of {\em
  Graduate Texts in Mathematics}.
\newblock Springer-Verlag, New York, 1984.

\bibitem{fhmz}
M.~Fabian, P.~H{\'a}jek, V.~Montesinos, and V.~Zizler.
\newblock A quantitative version of {K}rein's theorem.
\newblock {\em Rev. Mat. Iberoamericana}, 21(1):237--248, 2005.

\bibitem{bst}
Mari{\'a}n Fabian, Petr Habala, Petr H{\'a}jek, Vicente Montesinos, and
  V{\'a}clav Zizler.
\newblock {\em Banach space theory}.
\newblock CMS Books in Mathematics/Ouvrages de Math\'ematiques de la SMC.
  Springer, New York, 2011.
\newblock The basis for linear and nonlinear analysis.

\bibitem{ghm}
A.~S. Granero, P.~H{\'a}jek, and V.~Montesinos Santaluc{\'{\i}}a.
\newblock Convexity and {$w^*$}-compactness in {B}anach spaces.
\newblock {\em Math. Ann.}, 328(4):625--631, 2004.

\bibitem{ghp}
A.~S. Granero, J.~M. Hern{\'a}ndez, and H.~Pfitzner.
\newblock The distance {${\rm dist}(\mathcal B,X)$} when {$\mathcal B$} is a
  boundary of {$B(X^{\ast\ast})$}.
\newblock {\em Proc. Amer. Math. Soc.}, 139(3):1095--1098, 2011.

\bibitem{granero}
Antonio~S. Granero.
\newblock An extension of the {K}rein-\v {S}mulian theorem.
\newblock {\em Rev. Mat. Iberoam.}, 22(1):93--110, 2006.

\bibitem{gs}
Antonio~S. Granero and Marcos S{\'a}nchez.
\newblock Distances to convex sets.
\newblock {\em Studia Math.}, 182(2):165--181, 2007.

\bibitem{jz}
W.~B. Johnson and M.~Zippin.
\newblock Separable {$L_{1}$} preduals are quotients of {$C(\Delta )$}.
\newblock {\em Israel J. Math.}, 16:198--202, 1973.

\bibitem{kacena}
Miroslav Ka{\v{c}}ena.
\newblock On sequentially right {B}anach spaces.
\newblock {\em Extracta Math.}, 26(1):1--27, 2011.

\bibitem{kks}
Miroslav Ka{\v{c}}ena, Ond{\v{r}}ej F.~K. Kalenda, and Ji{\v{r}}{\'{\i}}
  Spurn{\'y}.
\newblock Quantitative {D}unford-{P}ettis property.
\newblock {\em Adv. Math.}, 234:488--527, 2013.

\bibitem{kps}
O.~F.~K. Kalenda, H.~Pfitzner, and J.~Spurn{\'y}.
\newblock On quantification of weak sequential completeness.
\newblock {\em J. Funct. Anal.}, 260(10):2986--2996, 2011.

\bibitem{ks-schur}
O.~F.~K. Kalenda and J.~Spurn{\'y}.
\newblock On a difference between quantitative weak sequential completeness and
  the quantitative {S}chur property.
\newblock {\em Proc. Amer. Math. Soc.}, 140(10):3435--3444, 2012.

\bibitem{qrdpp}
Ond{\v{r}}ej F.~K. Kalenda and Ji{\v{r}}{\'{\i}} Spurn{\'y}.
\newblock Quantification of the reciprocal {D}unford-{P}ettis property.
\newblock {\em Studia Math.}, 210(3):261--278, 2012.

\bibitem{lacey}
H.~Elton Lacey.
\newblock {\em The isometric theory of classical {B}anach spaces}.
\newblock Springer-Verlag, New York-Heidelberg, 1974.
\newblock Die Grundlehren der mathematischen Wissenschaften, Band 208.

\bibitem{morrison}
Terry~J. Morrison.
\newblock {\em Functional analysis}.
\newblock Pure and Applied Mathematics (New York). Wiley-Interscience [John
  Wiley \& Sons], New York, 2001.
\newblock An introduction to Banach space theory.

\bibitem{pel}
A.~Pe{\l}czy{\'n}ski.
\newblock Banach spaces on which every unconditionally converging operator is
  weakly compact.
\newblock {\em Bull. Acad. Polon. Sci. S\'er. Sci. Math. Astronom. Phys.},
  10:641--648, 1962.

\bibitem{pvwy}
Antonio~M. Peralta, Ignacio Villanueva, J.~D.~Maitland Wright, and Kari Ylinen.
\newblock Topological characterisation of weakly compact operators.
\newblock {\em J. Math. Anal. Appl.}, 325(2):968--974, 2007.

\bibitem{pfitzner}
H.~Pfitzner.
\newblock Weak compactness in the dual of a {$C^\ast$}-algebra is determined
  commutatively.
\newblock {\em Math. Ann.}, 298(2):349--371, 1994.

\end{thebibliography}
\bibliographystyle{plain}

\end{document}